\documentclass{amsart}
\usepackage{amssymb}
\usepackage{amsthm}
\usepackage{mathtools}
\usepackage{environ}
\usepackage{enumerate}
\usepackage{microtype}
\usepackage{enumitem}
\usepackage{mathrsfs}
\usepackage[utf8]{inputenc}
\usepackage{amsaddr}
\usepackage{orcidlink}
\usepackage{tikz}
\usetikzlibrary{patterns}
\usepackage{caption}

\theoremstyle{plain}
\newtheorem{theorem}{Theorem}[section]
\newtheorem{lemma}[theorem]{Lemma}
\newtheorem{proposition}[theorem]{Proposition}
\newtheorem{corollary}[theorem]{Corollary}
\newtheorem*{theoremA}{Theorem A}
\newtheorem*{theoremB}{Theorem B}
\newtheorem*{theoremC}{Theorem C}

\newtheorem*{theorem*}{Theorem}

\theoremstyle{definition}
\newtheorem{definition}[theorem]{Definition}
\newtheorem{example}[theorem]{Example}

\theoremstyle{remark}
\newtheorem*{remark}{Remark}

\DeclareFontFamily{U}{mathx}{\hyphenchar\font45}
\DeclareFontShape{U}{mathx}{m}{n}{<-> mathx10}{}
\DeclareSymbolFont{mathx}{U}{mathx}{m}{n}
\DeclareMathAccent{\widebar}{0}{mathx}{"73}

\newcommand{\C}{\mathbb{C}}
\newcommand{\D}{\mathbb{D}}
\newcommand{\R}{\mathbb{R}}
\newcommand{\N}{\mathbb{N}}
\newcommand{\psh}{\mathcal{PSH}}
\newcommand{\usc}{\mathcal{USC}}

\newcommand{\suchthat}{\mathrel{;}}
\DeclarePairedDelimiter\abs{\lvert}{\rvert}

\title[On the size of boundary pluripolar sets]{On the size of boundary pluripolar sets}
\author{Mårten Nilsson\,\orcidlink{0009-0000-4441-4254}}
\thanks{This work was supported by a research grant from the Sverker Lerheden Foundation.}
\address{Department of Mathematics \\
Stockholm University \\
106 91 Stockholm, Sweden}
\email{marten.nilsson@math.su.se}

\subjclass[2010]{Primary 32U30; Secondary 	32E20}

\begin{document}

\begin{abstract}
We prove a number of results related to the size and propagation of boundary pluripolar sets, the exceptional sets for the Dirichlet problem for the complex Monge--Ampère equation. We extend Stout's result that peak sets on strictly pseudoconvex domains $\Omega\subset\C^N$ must have topological dimension less than $N$ to also encompass non-propagating boundary pluripolar $F_\sigma$ sets. In particular, boundary pluripolar sets must propagate into the interior if their topological dimension exceeds $N-1$. We also prove that sets of sufficiently small Hausdorff dimension must be boundary pluripolar and non-propagating, provided that the domain admits peak functions with sufficient boundary regularity. Lastly, we prove that the class of Jensen measures and the class of representing measures do not coincide on any smooth, strictly pseudoconvex domain. This extends a result of Hedenmalm. 
\end{abstract}
\maketitle
\section{Introduction}
Let $\Omega$ denote a bounded domain in $\C^N$, and let $A \subset \partial\Omega$ be a set for which there exists a negative plurisubharmonic (psh) function $u$ defined on $\Omega$ ($u\in \psh^-(\Omega)$), not identically $-\infty$, with
\[\limsup_{z\rightarrow A} u(z) = -\infty.\]
Such \textit{boundary pluripolar sets} (b-pluripolar for short) were recently characterized as the exceptional sets for the Dirichlet problem for the complex Monge--Ampère operator ~\cite{nilsson2}. More precisely, under the assumption that $\Omega$ admits strong plurisubharmonic barriers, in the sense that we at every boundary point $\zeta \in \partial\Omega$ can find $u \in \psh(\Omega)\cap C(\widebar \Omega)$ with
\[
u(\zeta)=0, \quad u<0 \text{ on } \widebar \Omega \setminus \{\zeta\},
\]
then the Dirichlet problem 
\[
    \begin{cases}
    u \in L^\infty\cap\psh(\Omega) \\
    \det(\frac{\partial^2 u}{\partial z_j \partial \bar z_k})=0 \\
        \lim_{z \rightarrow \zeta}u(z) = \varphi(\zeta) \qquad \forall \zeta \in \partial\Omega \setminus E_\varphi 
    \end{cases}
\]
 is uniquely solvable (in the weak sense of Bedford--Taylor~\cite{bedford-taylor, bedford-taylor2}) if and only if $E_\varphi$ is b-pluripolar. Here, we assume that $\varphi\colon \partial \Omega \rightarrow \R$ is bounded, and continuous outside $E_\varphi$. Furthermore, the unique solution is continuous~\cite[Theorem~5.1]{nilsson} outside the closure of
    \[
    \hat E_\varphi \coloneqq\{z \in \widebar\Omega \suchthat u\in\psh^-(\Omega), \limsup_{w\rightarrow E_\varphi}u(w)=-\infty \implies u(z) =-\infty\},
    \]
the \textit{boundary pluripolar hull} of $E_\varphi$ (if the boundary pluripolar hull contains interior points, we say that the set is \textit{propagating}). Since discontinuity sets are necessarily $F_\sigma$, this motivates investigating necessary and sufficient criteria for when boundary pluripolar $F_\sigma$ sets propagate. 

In the first part of the paper, we obtain the following result:
\begin{theoremA}
Let $\Omega \subset \C^N$ be a strictly pseudoconvex domain. Then every b-pluripolar $F_\sigma$ set of topological dimension $N$ must propagate into the interior.
\end{theoremA}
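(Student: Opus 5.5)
We argue by contraposition: assume $A=\bigcup_j K_j\subset\partial\Omega$ is a b-pluripolar $F_\sigma$ set with each $K_j$ compact and whose hull $\hat A$ contains no interior points. We show that $A$ has topological dimension at most $N-1$. By the countable sum theorem for topological dimension of closed subsets of a separable metric space, it suffices to prove $\dim K\le N-1$ for each compact $K=K_j$. Each such $K$ is itself b-pluripolar and non-propagating, since $\hat K\subset\hat A$. The model is Stout's theorem that peak sets, and more generally intersections of peak sets for $A(\Omega)$, on strictly pseudoconvex domains have dimension $<N$. The plan is to show that a compact non-propagating b-pluripolar set behaves enough like such a set for Stout's argument to apply.

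\textbf{Step 1.} First we want a single function with a good boundary profile. Taking a countable sum $\sum 2^{-k}u_k$ of negative psh functions and using non-propagation, we would produce $u\in\psh^-(\Omega)$ with $\limsup_{z\to K}u=-\infty$ and $u>-\infty$ on all of $\Omega$. We cannot expect $u\not\equiv-\infty$ everywhere, but we can arrange $u(z_0)>-\infty$ at a chosen countable dense set of points. Next, we use the strict pseudoconvexity of $\Omega$ to pass from $u$ to holomorphic data. Plurisubharmonic functions on strictly pseudoconvex domains can be approximated from above by functions of the form $\max_k c_k\log|f_k|$ with $f_k\in A(\Omega)$ (Poletsky / Wikström-type approximation, or Jensen-measure duality). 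From this we want, for each $M>0$ and each compact $L\subset\Omega$, a function $f\in A(\Omega)$ with $\|f\|_{\widebar\Omega}\le 1$, $|f|\le e^{-M}$ on $K$, and $|f|\ge c>0$ at prescribed interior points. In other words, $K$ is \emph{nearly} a zero set of $A(\Omega)$-functions of norm one that do not collapse inside $\Omega$.

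\textbf{Step 2.} We then run the topological part of Stout's argument. Suppose $\dim K\ge N$. Then there exist a point $p\in K$ and an essential map $K\to S^{N-1}$ (or a nonzero Čech class in degree $N-1$ or $N$), which by stability persists under small perturbations. Working near $p$, where $\partial\Omega$ is locally strictly convex after a Narasimhan change of coordinates, we use the Alexander–Stolzenberg type fact behind Stout's proof: a compact subset of the boundary of dimension $\ge N$ has a polynomially/$A(\Omega)$-convex hull containing interior points. Concretely, analytic discs or Wermer-type hull elements attach to $K$ and enter $\Omega$. Every such element is a subset $\Sigma\subset\widebar\Omega$ with $\Sigma\cap\Omega\neq\emptyset$ satisfying a maximum principle relative to $K$: for any $f\in A(\Omega)$, $\sup_\Sigma|f|\le\sup_K|f|$. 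Applying this to the functions from Step 1 gives $|f|\le e^{-M}$ on $\Sigma$ for all $M$. This contradicts $|f|\ge c$ at an interior point of $\Sigma$, once we choose the prescribed points in Step 1 densely and use semicontinuity. Equivalently, $u=-\infty$ on $\Sigma\cap\Omega$, so $K$ propagates.

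\textbf{Main obstacle.} The hardest part is the combination of Step 1 with the hull statement. We must obtain from $K$ having dimension $N$ a piece of the $A(\Omega)$-hull inside $\Omega$, and this uses that $\Omega$ is strictly pseudoconvex and that $K\subset\partial\Omega$. The key input should be the theorem (Alexander, Stout) that a compact set in a strictly convex hypersurface whose hull is itself, i.e.\ which is "$A$-convex", has $\check H^{N}$ vanishing, hence dimension $<N$. Our approach would be to prove that a non-propagating b-pluripolar compact set is $A(\Omega)$-convex relative to $\widebar\Omega$, by the maximum-principle argument above, and then quote this cohomological dimension bound.
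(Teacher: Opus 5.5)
There is a genuine gap in Step~2, exactly where you place the ``key input''. The claim that a compact $K\subset\partial\Omega$ with $\dim K\ge N$ must have $A(\Omega)$-hull meeting $\Omega$ is false. Equivalently, it is false that an $A(\Omega)$-convex compact subset of a strictly convex hypersurface has dimension $<N$. Take
\[
K=\{(e^{i\theta_1},\dots,e^{i\theta_N})/\sqrt N \suchthat |\theta_j|\le\varepsilon\}\subset\partial\mathbb{B}.
\]
It is a product of arcs, hence polynomially convex. Since polynomials are dense in $A(\mathbb{B})$, it is also $A(\mathbb{B})$-convex. Yet it is homeomorphic to an $N$-cube. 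The broken link is ``$\check H^N(K)=0$, hence $\dim K<N$'': vanishing of absolute top-degree cohomology carries no dimension information, since the cube is contractible. So even a fully rigorous Step~1, which at best shows that $K$ is $A(\Omega)$- or polynomially convex, cannot give the theorem. This is also not how Stout's argument for peak sets works.

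What is missing is the stronger convexity statement that Stout's argument actually needs. First localize at $p$, so that $\Omega$ is convex, B-regular and strictly convex near $K$; you would also have to check that b-pluripolarity and non-propagation survive this step. Then show that for every real hyperplane $\Pi=\{L=0\}$ cutting off a small cap around $p$, the set $(K\cap\{L>0\})\cup(\Pi\cap\widebar\Omega)$ is polynomially convex. Gluing in the convex slab $\Pi\cap\widebar\Omega$ identifies, by excision, the relative group $\check H^N(K\cap\{L\ge 0\},K\cap\Pi)$ with the absolute $\check H^N$ of a polynomially convex set, which vanishes. The relative groups are what detect dimension $\ge N$; this is the content of Lemma~\ref{stout}.

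Non-propagation is used a second time to obtain this convexity (Lemma~\ref{polyconv}). Take $z_0\in\widebar\Omega\setminus K$ with $L(z_0)>0$, rescaled so that $L(z_0)>1$. Choose an entire psh $\tilde u$ with $\tilde u(z_0)>-1$, $\tilde u<0$ on $\widebar\Omega$, and $\tilde u<-\max_K|L|$ on $K$. This comes from Wikström's decreasing continuous approximants plus compactness of $K$, followed by Bremermann--Sibony and Oka--Weil on the convex domain; this is essentially your Step~1. Then $\tilde u+L$ separates $z_0$ from the set, and Hörmander's psh characterization of polynomial hulls applies.

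Two minor points. First, $\sum 2^{-k}u_k$ can equal $-\infty$ at every prescribed point, since each $u_k$ may be $-\infty$ at the others. The dense-set device is unnecessary anyway, because you can apply non-propagation directly at the point in question. Second, $\dim K\ge N$ means there is a closed $B\subset K$ and a map $B\to S^{N-1}$ that does not extend over $K$, not that there is an essential map $K\to S^{N-1}$.
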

\noindent
Contrapositively, non-propagating b-pluripolar sets have at most dimension $N-1$. The proof is essentially an adaptation of an argument due to Stout~\cite{stout}, who proved the corresponding statement for peak sets, i.e. closed sets $K \subset \partial \Omega$ for which there exists $h_K\in A(\Omega)$ (holomorphic on $\Omega$ and continuous on the closure) with
\[
h_K = 1 \text{ on }K, \quad |h_K| < 1 \text{ on }\widebar \Omega \setminus K.
\] 
Roughly, the idea is to show that even under the weaker assumption that $K$ is non-propagating and b-pluripolar, $K$ is still polynomially convex. The final step in Stout's proof 
then implies that $K$ has some topological properties which are incompatible with $\dim K \geq N$.

In Section~3, we establish sufficient criteria for b-pluripolarity in terms of Hausdorff dimension. Given a bounded domain $\Omega$ and a parameter $\beta>0$, we consider the set $P_\beta$ consisting of $\zeta \in \partial \Omega$ for which there exists a peak function $h_{\{\zeta\}}$ with
\[
\abs{h_{\{\zeta\}}(z)-1} \leq C_\zeta \|z-\zeta\|^\beta,
\]
where $\| \cdot\|$ denotes the Euclidean norm. We show 
\begin{theoremB}
Every set $A\subset P_\beta$ of zero $\beta$-dimensional Hausdorff measure is non-propagating and b-pluripolar.
\end{theoremB}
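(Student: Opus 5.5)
Since $H^\beta(A)=0$, for every $j\in\N$ I will choose a countable cover of $A$ by balls $B(\zeta_{j,k},r_{j,k})$ with $\zeta_{j,k}\in A\subset P_\beta$ and $\sum_k r_{j,k}^\beta < 4^{-j}$. The construction then rests on a log-potential built from peak functions. For $\zeta\in P_\beta$, the peak function gives $g_\zeta\coloneqq (1-h_{\{\zeta\}})/2$, which is holomorphic on $\Omega$, continuous on $\widebar\Omega$, and satisfies $\operatorname{Re} g_\zeta>0$ off $\zeta$, $g_\zeta(\zeta)=0$ and $\abs{g_\zeta(z)}\le C_\zeta\|z-\zeta\|^\beta$. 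For each ball I will use the function
\[
v_{j,k}(z)\coloneqq \log\frac{\abs{g_{\zeta_{j,k}}(z)}}{\,r_{j,k}^\beta+\abs{g_{\zeta_{j,k}}(z)}\,},
\]
which is psh and $\le 0$. It is at most $-\log 2$ wherever $\abs{g}\le r^\beta$, and the estimate $\abs{g}\le C\|z-\zeta\|^\beta$ shows this holds on $B(\zeta_{j,k},(r_{j,k}/C)^{1/\beta})\cap\Omega$. This is where the radius constant $C_\zeta$ causes trouble. I will handle it by first splitting $A=\bigcup_m A_m$ with $A_m=\{\zeta : C_\zeta\le m\}$. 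A countable union of b-pluripolar sets is b-pluripolar by the usual $\sum \epsilon_m u_m$ trick, and the same trick preserves non-propagation at any fixed interior point. So it suffices to treat each $A_m$, where the constant is uniform and I can rescale the radii. Covering by balls centered in $A_m$ costs only a constant factor $2^\beta$.

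Next I set $u_j\coloneqq\sum_k v_{j,k}$. Near each point of $A_m$ infinitely many large negative contributions are wanted, so I will instead use $u=\sum_j u_j$. Every point of $A$ lies in a ball at every level $j$, so $u\le -j\log 2$ near $A$ for all $j$, i.e.\ $\limsup_{z\to A}u=-\infty$. To make $u\not\equiv-\infty$, and more strongly $u(z_0)>-\infty$ at every interior $z_0$ (which is exactly non-propagation), I estimate at a fixed $z_0\in\Omega$. There $\abs{g_\zeta(z_0)}\ge \operatorname{Re} g_\zeta(z_0)=:\delta_\zeta(z_0)>0$, so $v_{j,k}(z_0)\ge -\log(1+r_{j,k}^\beta/\delta_{\zeta_{j,k}}(z_0))\ge -r_{j,k}^\beta/\delta_{\zeta_{j,k}}(z_0)$. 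Summing gives $u(z_0)\ge -\sum_j 4^{-j}/\inf_\zeta \delta_\zeta(z_0)$, provided the infimum over centers is positive.

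The main obstacle is exactly this uniform lower bound: I need $\inf_{\zeta\in A_m}\operatorname{Re} g_\zeta(z_0)>0$ for each interior $z_0$, i.e.\ the peak functions must not approach $1$ at interior points uniformly in $\zeta$. My first attempt will be to build it into the splitting: set $A_{m}=\{\zeta : C_\zeta\le m,\ \sup_{\|z\|\le m,\ \mathrm{dist}(z,\partial\Omega)\ge 1/m}\abs{h_\zeta(z)}\le 1-1/m\}$. This is still a countable exhaustion of $A$, because $\abs{h_\zeta}<1$ on $\Omega$ and compact subsets have a maximum strictly less than one. It gives the bound on the compact set $K_m$, and psh functions bounded below on a compact set with interior are not $\equiv -\infty$. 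For the full non-propagation statement at an arbitrary $z_0$, I will combine the pieces with weights $\epsilon_m$ chosen so that the tail sum is finite at $z_0$. This is possible because $z_0\in K_m$ for all large $m$, and the finitely many early pieces are finite at $z_0$ by the same per-point estimate, since each single $\delta_\zeta(z_0)>0$. Some care is needed so that the partial sums converge to a psh function (decreasing limits of $\le0$ psh functions) rather than $\equiv-\infty$; the lower bound at $z_0$ handles that.
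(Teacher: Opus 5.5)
\textbf{The building block is not plurisubharmonic, so the construction fails at its first step.} With $\rho=r_{j,k}^\beta$ and $g=g_{\zeta_{j,k}}$, your function is
\[
v_{j,k}=\log\frac{\abs{g}}{\rho+\abs{g}}=\phi(\log\abs{g}),\qquad \phi(t)=t-\log(\rho+e^t).
\]
Here $\phi''(t)=-\rho e^t/(\rho+e^t)^2<0$, so $\phi$ is concave. Since $\log\abs{g}$ is pluriharmonic on $\Omega$ ($g\neq 0$ there), $v_{j,k}$ is plurisuperharmonic. In one variable, $\Delta \log\frac{\abs{w}}{\rho+\abs{w}}=-\frac{\rho}{\abs{w}(\rho+\abs{w})^2}<0$. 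Hence $v_{j,k}$ is strictly superharmonic along every complex line on which $g$ is nonconstant. So neither $v_{j,k}$ nor $u$ is psh.

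The shape you want is right: $\le 0$, $\le -c$ where $\abs{g}\le\rho$, and of size $O(\rho/\operatorname{Re} g)$ elsewhere. It is realized by $\log\abs{g/(g+\rho)}$:
\begin{itemize}
\item Since $\operatorname{Re} g\ge 0$, $g+\rho$ has no zeros, so this is $\log\abs{g}$ minus a pluriharmonic function, hence psh.
\item $\abs{g+\rho}\ge\abs{g}$ gives $\le 0$.
\item $\abs{g+\rho}^2\ge\abs{g}^2+\rho^2$ gives $\le-\tfrac12\log 2$ where $\abs{g}\le\rho$.
\item $\abs{g+\rho}\le\abs{g}+\rho$ gives exactly your lower bound $-\log(1+\rho/\abs{g})$.
\end{itemize}
The paper instead uses the pluriharmonic function $-\omega(h_\zeta(z),\D,\partial\D\cap\D(1,\rho))$, the harmonic measure of a short arc at $1$, which has the same features.

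\textbf{The uniformity step also has two errors.} First, your sets $A_m$ need not exhaust $A$, because the compact set and the threshold share one index. On the unit ball, $h(z)=(1+z_1)/2$ peaks at $e_1$ with $\abs{h-1}\le\tfrac12\|z-e_1\|$. Yet $K_m=\{\|z\|\le 1-1/m\}$ and $\sup_{K_m}\abs{h}=1-\frac{1}{2m}>1-\frac1m$ for every $m$. Second, for $z_0\notin K_m$, positivity of each individual $\delta_\zeta(z_0)$ does not bound an infinite sum over varying centers.

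Both are fixed by decoupling. Non-propagation is a pointwise statement, so fix $z_0$ and take
\[
A_m=\{\zeta\in A:\ C_\zeta\le m,\ \operatorname{Re} g_\zeta(z_0)\ge 1/m\}.
\]
These do exhaust $A$ and give $\inf_{A_m}\delta_\zeta(z_0)\ge 1/m$ directly. Alternatively, keep a fixed base point and invoke Harnack's inequality for the positive pluriharmonic functions $\operatorname{Re} g_\zeta$.

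The paper sidesteps this decomposition entirely. It composes $h_\zeta$ with a disc automorphism so that $h_\zeta(a)=0$; the H\"older estimate survives with a new constant, and $\omega(0,\D,I)=\abs{I}/2\pi$ gives a bound at $a$ that is uniform in $\zeta$.

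With these repairs your argument is essentially the paper's: compose peak functions with a one-variable subharmonic function localized at scale $\rho\sim r^\beta$ near $1$, and sum over covers.
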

\noindent
 The argument is quite elementary; we simply postcompose peak functions with the harmonic measure of arcs on the circle. 

Finally, in Section~4, we compare representing measures and Jensen measures at $z_0 \in \Omega$ supported on $\partial \Omega$, where $\Omega$ is smooth and strictly pseudoconvex. We show that the compact totally null sets of the two classes differ; the former are the zero sets for $A(\Omega)$, in particular b-pluripolar and non-propagating, while the class of totally null sets for Jensen measures contains every b-pluripolar set that does not propagate to $z_0$. From this, we deduce 
\begin{theoremC}
Let $\Omega\subset \C^N$, $N>1$ be a smooth, strictly pseudoconvex domain. Then there are always representing measures that are not Jensen measures.
\end{theoremC}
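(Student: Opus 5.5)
The plan is to argue by contradiction, comparing two classes of compact "totally null" sets. Call a compact set $K\subset\partial\Omega$ \emph{totally null} for a class of measures if $\mu(K)=0$ for every measure in the class.

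Suppose every representing measure for $z_0\in\Omega$ supported on $\partial\Omega$ were a Jensen measure. Jensen measures are always representing measures, so the two classes would coincide. Their compact totally null sets would then coincide as well.

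First I will identify the totally null sets for representing measures. The tool is the characterization of annihilating and representing measures for $A(\Omega)$ on smooth strictly pseudoconvex domains, via Henkin's theorem and the Rudin--Carleson type results of Henkin, Chollet and Davie--Øksendal. These give that a compact $K\subset\partial\Omega$ is null for all representing measures exactly when $K$ is a zero set for $A(\Omega)$, equivalently a peak set.

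By Stout's theorem, or by Theorem A, such $K$ has topological dimension at most $N-1$. It is also totally real in the sense that it is locally contained in an integral manifold of the complex tangent distribution. In particular every such $K$ is b-pluripolar and non-propagating: take $\log\abs{h_K}$, which equals $-\infty$ only on $K$ itself.

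Second, I will identify the totally null sets for Jensen measures. If $K$ is b-pluripolar and does not propagate to $z_0$, pick $u\in\psh^-(\Omega)$ with $u\to-\infty$ at $K$ and $u(z_0)>-\infty$. For a Jensen measure $\mu$ at $z_0$ we have $u(z_0)\le\int u^*\,d\mu$, where $u^*$ is the upper semicontinuous extension to the boundary. Since $u^*=-\infty$ on $K$, this forces $\mu(K)=0$. (Approximate $u$ from above by continuous psh functions if the Jensen inequality is only given for those.) So every compact b-pluripolar set not propagating to $z_0$ is Jensen-null.

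The contradiction requires a compact b-pluripolar, non-propagating $K$ that is \emph{not} a zero set for $A(\Omega)$. Theorem B is the natural source. On a smooth strictly pseudoconvex domain every boundary point lies in $P_\beta$ for some fixed $\beta$: the Henkin/Fornæss peak functions satisfy $\abs{h-1}\lesssim\|z-\zeta\|$ in complex-normal directions and $\|z-\zeta\|^2$ in complex-tangential ones, so $\beta=1$ works.

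Hence any compact set of zero $1$-dimensional Hausdorff measure is b-pluripolar and non-propagating. Now choose $K$ to be a Cantor set of Hausdorff dimension less than $1$, but of topological type incompatible with peak sets. A Cantor set is totally disconnected and so not ruled out by dimension. Instead I will pick it inside a small real curve transverse to the complex tangent space, for instance in the complex-normal direction at a boundary point. Peak sets and zero sets must be locally contained in complex-tangential (interpolation) manifolds; for $N>1$ this fails for a perfect set spread along a transverse curve.

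The main obstacle is verifying rigorously that this Cantor set is not an $A(\Omega)$ zero set, or, more directly, constructing a representing measure charging it. I would first try the direct route. Restrict functions in $A(\Omega)$ to the analytic disc $\Omega\cap(\zeta+\C\nu)$ through the transverse curve, and use a harmonic-measure argument on that disc. A set of positive logarithmic capacity on the boundary arc of the disc is charged by some representing measure of the disc, and pushing this forward gives a representing measure for $\Omega$. This needs $K$ to have positive capacity yet zero $\mathcal H^1$-measure, which is possible. The remaining gap is to arrange the point $z_0$ inside the disc, and the disc boundary to lie in $\partial\Omega$ near $K$.
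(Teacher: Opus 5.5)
\textbf{The gap: your candidate set is a peak set.} The final step fails because the set you propose is null for every representing measure, so no contradiction arises. Your claim that peak sets must lie in complex-tangential manifolds is a rigidity statement about smooth submanifolds, not about Cantor sets. In fact Davie--{\O}ksendal showed that on a strictly pseudoconvex domain every compact $\mathcal{H}^1$-null subset of $\partial\Omega$ is a peak-interpolation set for $A(\Omega)$.

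Here is your exact setup in a concrete case. On the unit ball $\mathbb{B}\subset\C^2$ let $K=\{(e^{it},0)\suchthat t\in E\}$, where $E$ is a Cantor set of length zero but positive capacity; $K$ lies on a curve in the complex-normal direction. Let $g\in A(\D)$ peak on $e^{iE}$ (Fatou). Then $h(z)=g(z_1)$ peaks exactly on $K$, because $\abs{z_1}=1$ on $\widebar{\mathbb{B}}$ forces $z_2=0$. Hence $\nu(K)=\lim_n\int h^n\,d\nu=\lim_n h(z_0)^n=0$ for every $\nu\in M_{z_0}(\partial\mathbb{B})$.

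Your ``direct route'' cannot repair this. First, the only representing measure for the disc algebra at an interior point is harmonic measure, which ignores sets of zero length whatever their capacity. More fundamentally, pushing forward harmonic measure along an analytic disc through $z_0$ always gives a Jensen measure, since $\log\abs{f\circ\phi}$ is subharmonic. So it can never produce a representing measure that is not Jensen. Indeed, if such a measure charged $K$, then $K$ would propagate to $z_0$ along the disc.

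\textbf{The missing idea: $K$ only needs to avoid propagating to $z_0$, not be non-propagating.} Zero sets of $A(\Omega)$ never propagate, since $\log\abs{f}-\max_{\widebar\Omega}\log\abs{f}$ is finite on all of $\Omega$. So any compact b-pluripolar set that propagates somewhere, but not to $z_0$, is automatically not a zero set. It is therefore charged by some $\nu\in M_{z_0}(\partial\Omega)$, while your second step shows it is Jensen-null.

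The paper takes $\tilde K=\partial\Omega\cap\Pi$, with $\Pi$ a complex hyperplane meeting $\Omega$ but not containing $z_0$; this is where $N>1$ is used. Let $\ell$ be an affine function defining $\Pi$. Then $\log\abs{\ell}$ minus a constant shows that $\tilde K$ is b-pluripolar and does not propagate to $z_0$. The maximum principle on $\Pi\cap\Omega$ shows that it does propagate there.

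A smaller point about your second step: the Jensen inequality is given only for $\log\abs{f}$ with $f\in A(\Omega)$. Passing to continuous psh functions on $\widebar\Omega$ is therefore not just monotone approximation. It requires approximating them by $\max_k c_k\log\abs{f_k}$, using Bremermann--Sibony after extending to a slightly larger strictly pseudoconvex domain, as the paper does.
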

\par \noindent
 This extends a result of Hedenmalm (\cite[Theorem~3.7]{hedenmalm}, see also Theorem~\ref{hedenmalmsats} below) on the unit ball.

We have labeled the main results as Theorems A--C for ease of reference;
these correspond to Theorems~\ref{topdim}, \ref{hausdorff}, and Corollary~\ref{hedenmalmgen}.

\section{Boundary pluripolar sets of dimension \texorpdfstring{$N$}{N} must propagate}
Our aim in this section is to prove that on strictly pseudoconvex domains in $\C^N$, 
a non-propagating b-pluripolar $F_\sigma$ set $A$ with $\dim A \geq N$ cannot exist. The argument will be based on the following result, which was established using a cohomological argument in the proof of the main theorem in~\cite{stout}.
\begin{lemma}[Stout]\label{stout}
Let $\Omega\subset \C^N$ be a bounded convex domain, and suppose that $A \subset \partial \Omega$ is a compact set such that for every neighborhood $U$ (in $\partial \Omega$) containing $p\in A$, there is a real hyperplane $\Pi_U: L(z)=0$ such that 
\[
p \in \{L(z)>0\}\cap \partial \Omega \subset U.
\]
If each $(A\cap \{L(z)>0\}) \cup (\Pi_U \cap \widebar \Omega)$ is polynomially convex, then $\dim A < N$. 
\end{lemma}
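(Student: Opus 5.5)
We argue by contradiction, following Stout's cohomological strategy: assume $\dim A \geq N$ and derive a nonvanishing Čech cohomology class that polynomial convexity forbids. The basic tool is the classical fact (Browder/Serre) that a polynomially convex compact set $X\subset\C^N$ satisfies $\check H^k(X;\C)=0$ for all $k\geq N$. We also use the characterization of covering dimension via essential maps: a compact set $A$ has $\dim A\geq N$ if and only if some closed $B\subset A$ admits a map $f\colon B\to S^{N-1}$ that does not extend to $A$. Equivalently, the restriction $\check H^{N-1}(A)\to \check H^{N-1}(B)$, or the corresponding relative group, fails to behave as it would in dimension $<N$. After localizing, this gives a point $p\in A$ and arbitrarily small neighborhoods $U$ of $p$ in $\partial\Omega$ such that $\check H^N(A\cap \overline U, A\cap\partial U)\neq 0$ (with $\partial U$ the relative boundary in $\partial\Omega$), or an analogous nonvanishing local cohomology.

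We apply the hypothesis at such a $p$ with such a $U$, obtaining a hyperplane $\Pi=\Pi_U$. Put $A_+=A\cap\{L>0\}$, $D=\Pi\cap\widebar\Omega$, and $X=\overline{A_+}\cup D$, the set assumed polynomially convex. Since $\Omega$ is convex, $D$ is a compact convex set, hence contractible, so $\check H^k(D)=0$ for $k\geq 1$. The long exact sequence of the pair $(X,D)$ then gives $\check H^N(X)\cong \check H^N(X,D)$. By excision/continuity of Čech cohomology for compact pairs, $\check H^N(X,D)\cong \check H^N(\overline{A_+},\overline{A_+}\cap\Pi)$, and $\overline{A_+}\cap\Pi\subset A\cap\Pi$ is the part of $A$ on the "frontier" of the cap $\{L>0\}\cap\partial\Omega\subset U$. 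Polynomial convexity of $X$ yields $\check H^N(X)=0$, so this relative group vanishes.

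It remains to obtain a contradiction from this vanishing. As $U$ shrinks, the caps $\{L>0\}\cap\partial\Omega$ form a neighborhood basis of $p$ in $\partial\Omega$. The vanishing of $\check H^N(\overline{A_+},\overline{A_+}\cap\Pi)$ for a basis of such caps at every point then forces the local cohomological dimension of $A$ at every $p$ to be $<N$. By Alexandroff's theorem, for finite-dimensional compacta (note $A\subset\partial\Omega$ has $\dim\leq 2N-1$) covering dimension equals integral cohomological dimension, and working with $\C$ or $\mathbb{Z}$ coefficients appropriately we conclude $\dim A<N$, the desired contradiction.

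The main obstacle is this final step. One must match the relative groups over the caps with the standard local criterion for dimension: $\dim A\geq N$ if and only if there exist $p$ and arbitrarily small closed neighborhoods $V$ with $\check H^N(V,\partial V)\neq 0$, in integer coefficients. One must also handle the passage between coefficient fields, since the Browder vanishing theorem is usually stated over $\C$ or $\mathbb{Z}$. I would first establish the integral version of the vanishing for polynomially convex sets, namely $\check H^k(X;\mathbb{Z})=0$ for $k\geq N$ via Andreotti–Narasimhan. Then I would invoke the local characterization of cohomological dimension directly with the caps $\overline{\{L>0\}}\cap A$ as the neighborhoods $V$, whose frontiers lie in $\Pi$.
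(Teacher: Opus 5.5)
Up to the vanishing statement your argument is sound, and it is the cohomological argument of Stout that the paper cites without giving a proof of its own. Indeed, $X=\overline{A_+}\cup D$ is exactly the set assumed polynomially convex, and $D\neq\emptyset$ is compact and convex. The exact sequence of $(X,D)$ (for $k=1$, use that $\check H^0(X)\to\check H^0(D)$ is onto) and excision for the closed couple $\{\overline{A_+},D\}$ then give $H^k_c(A_+;\mathbb{Z})\cong\check H^k(\overline{A_+},\overline{A_+}\cap\Pi;\mathbb{Z})\cong\check H^k(X;\mathbb{Z})=0$ for \emph{all} $k\geq N$.

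The gap is the final step, and it fails in two ways.

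First, your local criterion points the wrong way. Knowing that $\dim A\geq N$ produces \emph{some} small neighborhoods $V$ with $\check H^N(V,\partial V)\neq 0$. This says nothing about the caps: they are dictated by the hypothesis, they are not your $V$'s, and nonvanishing for a pair does not pass to a smaller pair. What you actually need is an implication of the form ``vanishing on a neighborhood basis $\Rightarrow$ small dimension''.

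Second, in degree $N$ alone that implication is false. Take $A=\partial\mathbb{B}$. Every cap $C=A\cap\{L\geq 0\}$ is a closed $(2N-1)$-cell with $C\cap\Pi=\partial C$, so $\check H^N(C,C\cap\Pi)=0$ for $N\geq 2$, although $\dim A=2N-1$. Here the polynomial convexity hypothesis fails only in degree $2N-1$. You also cannot first shrink $A$ to a closed subset of dimension exactly $N$, because the hypothesis of the lemma is not inherited by closed subsets. (It is inherited in the application, Theorem~\ref{topdim}, since closed subsets of compact non-propagating b-pluripolar sets are again such; but not in the lemma as stated.)

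The repair is to argue in the top degree. Let $n=\dim_{\mathbb{Z}}A\leq 2N-1$ and assume $n\geq N$.
\begin{enumerate}
\item For every open $V\subseteq A$ you have $H^{n+1}_c(V;\mathbb{Z})\cong\check H^{n+1}(A,A\setminus V)=0$. Hence Mayer--Vietoris with compact supports makes $H^n_c(V_1)\oplus H^n_c(V_2)\to H^n_c(V_1\cup V_2)$ surjective.
\item The open caps $A_+=A\cap\{L>0\}$ form a basis of $A$, and $H^n_c(A_+)=0$. This is exactly where you need the vanishing in degree $n$, not $N$. By induction, $H^n_c$ vanishes on finite unions of caps.
\item Every class in $H^n_c(V)$ comes from $H^n_c(V')$ for some $V'\Subset V$, and $\overline{V'}$ is covered by finitely many caps inside $V$. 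So $H^n_c(V)=0$ for all open $V$, that is, $\check H^n(A,F;\mathbb{Z})=0$ for every closed $F\subseteq A$.
\end{enumerate}
This gives $\dim_{\mathbb{Z}}A\leq n-1$, a contradiction. Alexandroff's theorem then yields $\dim A=\dim_{\mathbb{Z}}A<N$. With this replacement, the essential-map and local-criterion discussion in your first and last paragraphs becomes unnecessary.
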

\begin{figure}[h]
\centering
\tikzset{every picture/.style={line width=0.75pt}}

\begin{tikzpicture}[x=0.75pt,y=0.75pt,yscale=-0.7,xscale=0.7]

\fill[pattern=north west lines,pattern color=black!35]
  (58.57,247.01)
    .. controls (73.86,143.18) and (191.74,62.02) .. (334.27,61.73)
    .. controls (478.52,61.42) and (596.29,144.05) .. (607.65,249.64)
  -- (607.65,247.01)
    .. controls (607.65,258.06) and (484.74,267.01) .. (333.11,267.01)
    .. controls (181.49,267.01) and (58.57,258.06) .. (58.57,247.01)
  -- cycle;

\draw
  (58.57,247.01)
    .. controls (73.86,143.18) and (191.74,62.02) .. (334.27,61.73)
    .. controls (478.52,61.42) and (596.29,144.05) .. (607.65,249.64);

\draw[dash pattern={on 4.5pt off 4.5pt}]
  (58.57,247.01) .. controls (58.57,235.97) and (181.49,227.01) .. (333.11,227.01)
  .. controls (484.74,227.01) and (607.65,235.97) .. (607.65,247.01)
  .. controls (607.65,258.06) and (484.74,267.01) .. (333.11,267.01)
  .. controls (181.49,267.01) and (58.57,258.06) .. (58.57,247.01) -- cycle;

\fill[white]
  (258,77) .. controls (278,67) and (368,57) .. (348,77)
  .. controls (328,97) and (363,114) .. (348,137)
  .. controls (333,160) and (278,167) .. (258,137)
  .. controls (238,107) and (238,87) .. (258,77) -- cycle;

\draw
  (258,77) .. controls (278,67) and (368,57) .. (348,77)
  .. controls (328,97) and (363,114) .. (348,137)
  .. controls (333,160) and (278,167) .. (258,137)
  .. controls (238,107) and (238,87) .. (258,77) -- cycle;

\node at (310,130) {$U$};

\fill[pattern=north east lines,pattern color=black!45]
  (191.56,69.69) -- (618.73,29.09) -- (439.44,86.31) -- (12.27,126.91) -- cycle;
\draw
  (191.56,69.69) -- (618.73,29.09) -- (439.44,86.31) -- (12.27,126.91) -- cycle;

\fill[white]
  (261.56,85.78) .. controls (260.97,80.36) and (276.06,74.26) .. (295.28,72.14)
  .. controls (314.49,70.03) and (330.55,72.71) .. (331.14,78.12)
  .. controls (331.74,83.54) and (316.65,89.64) .. (297.43,91.76)
  .. controls (278.22,93.87) and (262.16,91.2) .. (261.56,85.78) -- cycle;

\draw[dash pattern={on 4.5pt off 4.5pt}]
  (261.56,85.78) .. controls (260.97,80.36) and (276.06,74.26) .. (295.28,72.14)
  .. controls (314.49,70.03) and (330.55,72.71) .. (331.14,78.12)
  .. controls (331.74,83.54) and (316.65,89.64) .. (297.43,91.76)
  .. controls (278.22,93.87) and (262.16,91.2) .. (261.56,85.78) -- cycle;

\fill (290,82) circle (1.6pt) node[right] {$p$};
\end{tikzpicture}

\caption*{\textbf{Figure.} A point $p\in \{L(z)>0\}\cap \partial \Omega \subset U$.} 
\end{figure}
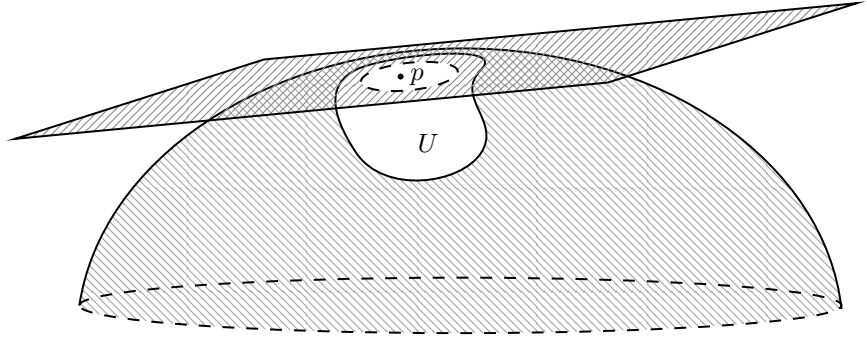

In order to apply this lemma, we begin by proving
\begin{lemma}\label{approx}
Assume that $\Omega$ is convex. Then every $u\in \psh(\Omega)\cap C(\widebar \Omega)$ can be uniformly approximated on $\widebar \Omega$ by elements in $\psh(\C^N)$.
\end{lemma}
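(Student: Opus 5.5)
The approach is to exploit convexity by dilating the domain about an interior point, which turns $u$ into a psh function on a larger open set containing $\widebar \Omega$. That function is then glued to a large multiple of a global psh exhaustion to obtain an entire psh function.

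Without loss of generality $0\in\Omega$. For $r<1$ close to $1$, set $u_r(z) := u(rz)$, which is psh on $r^{-1}\Omega$. Since $\Omega$ is convex and contains $0$, we have $\widebar\Omega \subset r^{-1}\Omega$ for every $r<1$: a point $z\in\widebar\Omega$ gives $rz = rz + (1-r)\cdot 0 \in \Omega$, because a convex combination of a point of the closure with an interior point, taken with positive weight on the interior point, lies in the interior. Since $u$ is uniformly continuous on $\widebar\Omega$, we get $\sup_{\widebar\Omega}|u_r-u| \le \omega_u\bigl((1-r)\operatorname{diam}\Omega\bigr)\to 0$, where $\omega_u$ is the modulus of continuity of $u$. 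It therefore suffices, for fixed $r$ and $\varepsilon>0$, to find $v\in\psh(\C^N)$ with $|v-u_r|<\varepsilon$ on $\widebar\Omega$.

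The gluing is done with a convex defining function. Let $\rho$ be the Minkowski functional of $\Omega$, which is convex, continuous, and satisfies $\Omega=\{\rho<1\}$ and $\widebar\Omega=\{\rho\le 1\}$. Choose $1<s<t$ with $\{\rho\le t\}=t\widebar\Omega\subset r^{-1}\Omega$; this is possible because $\widebar\Omega$ is a compact subset of the open set $r^{-1}\Omega$ and $t\widebar\Omega$ decreases to $\widebar\Omega$ as $t\downarrow 1$. First smooth $u_r$ slightly: convolution with a mollifier gives a continuous psh function $\tilde u$ on a neighborhood of $\{\rho\le t\}$ with $|\tilde u-u_r|<\varepsilon$ on $\{\rho\le t\}$. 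In fact continuity and boundedness of $u_r$ on a slightly larger compact set already suffice, so this step is optional. Let $M=\sup_{\{\rho\le t\}}|\tilde u|$, pick $A>0$ with $A(s-1)>2M+1$, and define
\[
v(z)=\begin{cases}\max\bigl(\tilde u(z),\, A(\rho(z)-1)-M\bigr), & \rho(z)< t,\\ A(\rho(z)-1)-M, & \rho(z)\ge t.\end{cases}
\]
The function $A(\rho-1)-M$ is convex, hence psh. On the annulus $\{s\le\rho\le t\}$ it is at least $A(s-1)-M > M\ge\tilde u$, so the maximum equals the second term there. Consequently $v$ is psh near $\{\rho=t\}$, and by the gluing lemma $v\in\psh(\C^N)$. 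On $\widebar\Omega$ we have $A(\rho-1)-M\le -M\le\tilde u$, so $v=\tilde u$ there, and therefore $|v-u|\le |\tilde u-u_r|+|u_r-u|$ is small.

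The main obstacle is the first step: checking that dilation keeps $\widebar\Omega$ compactly inside the domain of $u_r$, together with the uniform convergence $u_r\to u$. This is exactly where both convexity and continuity up to the boundary are used. The gluing step is standard once the convex exhaustion $\rho$ is available.
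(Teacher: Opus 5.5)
Your proof is correct, and it takes a genuinely different route from the paper's.

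\textbf{The paper's route.} It has three steps:
\begin{enumerate}
\item It applies an outer approximation theorem for Lipschitz domains (the cited result of Hed) to get a continuous psh $u'$ on some $\Omega'\Supset\Omega$.
\item It uses Bremermann--Sibony to approximate $u'$ by $\max_j c_j\log|f_j|$ with $f_j\in\mathcal{O}(\Omega')$.
\item It uses Oka--Weil to replace the $f_j$ by polynomials.
\end{enumerate}

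\textbf{Your route.} You replace step 1 by the dilation $u(rz)$, which works because convexity gives $r\widebar\Omega\subset\Omega$ for $r<1$. You replace steps 2 and 3 by an elementary gluing with $A(\rho-1)-M$, where convexity enters again through the Minkowski functional. Your verifications are all sound: the inclusion $\widebar\Omega\subset r^{-1}\Omega$, the uniform convergence $u_r\to u$, the open cover $\{\rho<t\}\cup\{\rho>s\}=\C^N$, and $v=\tilde u$ on $\widebar\Omega$.

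\textbf{What each buys.} Your argument is self-contained and avoids three nontrivial theorems. It suffices wherever only the \emph{statement} of the lemma is used, e.g.\ in Lemma~\ref{polyconv}. The paper's route gives extra structure that is used later:
\begin{itemize}
\item approximants of the explicit form $\max_j c_j\log|P_j|$, which the Proposition at the end of Section~2 takes ``by the proof of Lemma~\ref{approx}'';
\item an outer-approximation-plus-Bremermann--Sibony step that is reused in Theorem~\ref{jensen} on strictly pseudoconvex domains. Those need not be star-shaped, so dilation is unavailable there.
\end{itemize}
You could still recover the logarithmic-polynomial form from your construction. Your $v$ is continuous and entire, so you can apply Bremermann--Sibony on $\C^N$ and use Taylor polynomials in place of Oka--Weil.
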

\begin{proof}
Since $\Omega$ is convex, the boundary is Lipschitz and we may use the main result of~\cite{hed} to find $\Omega' \Supset \Omega$ and $u'\in \psh(\Omega')\cap C(\Omega')$ with $|u-u'|< \varepsilon$ on $\Omega$. Furthermore, taking a Minkowski sum of $\Omega$ and a small enough ball, we may assume that $\Omega'$ is convex. A result of Bremermann and Sibony (\cite[Théorème~9]{sibony}, see also~\cite[Theorem~1.3.9]{stoutbok}) then yields holomorphic functions $f_1,\dots,f_n\in \mathcal{O}(\Omega')$ and constants $1\geq c_1,\dots, c_n>0$ such that 
\[
\big|u - \max_j (c_j\log|f_j|)\big|<\varepsilon'\quad \text{ on $\Omega$.}
\]
Using that convex sets are polynomially convex, the Oka--Weil theorem lets us replace $f_j$ by holomorphic polynomials $P_j$. This yields approximants defined on $\C^N$.
\end{proof}
We will also need the following. Recall that $\Omega$ is said to be \textit{B-regular} if every boundary point admits a strong plurisubharmonic barrier.
\begin{lemma}\label{polyconv}
Let $\Omega$ be convex and B-regular, $\Pi: L(z)=0$ a real hyperplane, and let $A\subset \partial \Omega$ be a compact b-pluripolar set. If $A$ does not propagate, then $A\cup (\Pi\cap \widebar \Omega)$ is polynomially convex.
\end{lemma}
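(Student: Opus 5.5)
Set $K := A\cup(\Pi\cap\widebar\Omega)$. Since $\widebar\Omega$ is convex, hence polynomially convex, the hull $\hat K$ lies in $\widebar\Omega$. So it suffices to take $z_0\in\widebar\Omega\setminus K$ and produce a plurisubharmonic function on $\C^N$ that is strictly larger at $z_0$ than its maximum over $K$. For compact sets, polynomial convexity coincides with convexity with respect to $\psh(\C^N)$, so this is enough. We may assume $L(z_0)\neq 0$, and after replacing $L$ by $-L$ if necessary, that $L(z_0)>0$. Everything then takes place on the closed half $H:=\widebar\Omega\cap\{L\ge 0\}$.

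\textbf{Step 1: a function that is $-\infty$ on $A$ but finite at $z_0$.} Because $A$ does not propagate, $z_0\notin\hat A$. (If $z_0\in\partial\Omega\setminus A$, we use the hull as defined on $\widebar\Omega$; this is the case that needs care.) Hence there is $v\in\psh^-(\Omega)$ with $\limsup_{z\to A}v=-\infty$ and $v(z_0)>-\infty$. Next we need continuity up to the boundary. Using B-regularity, we replace $v$ by a continuous minorant. For each $M>0$, consider the Perron--Bremermann envelope
\[
u_M:=\sup\{w\in\psh(\Omega): w^*\le \max(v^*,-M) \text{ on } \partial\Omega,\ w\le 0\}.
\]
On B-regular domains, such envelopes with continuous boundary data are continuous on $\widebar\Omega$. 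The data $\max(v^*,-M)$ is only upper semicontinuous, so we approximate it from above by continuous $\varphi_j\searrow \max(v^*,-M)$ and use the corresponding continuous envelopes $U_j\in\psh(\Omega)\cap C(\widebar\Omega)$. We want $U_j\le -M+\delta$ on $A$ for large $j$, by compactness of $A$ and upper semicontinuity. We also want $U_j(z_0)\ge v(z_0)$, which holds because $v$ itself competes in the envelope. So we obtain $U\in \psh(\Omega)\cap C(\widebar\Omega)$ with $U\le 0$, $U\le -M$ on $A$, and $U(z_0)\ge v(z_0)-1$.

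If $z_0$ is a boundary point, $v(z_0)$ has to be read as $\limsup$, and we also need the envelope to stay large near $z_0$. This is where B-regularity and the choice $\varphi_j(z_0)\ge v^*(z_0)$ are used. I expect this to be the most delicate step.

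\textbf{Step 2: using the hyperplane.} Let $c:=\max_{\widebar\Omega}L>0$ and set
\[
W:=U+\lambda\,(L-\tfrac12 L(z_0)).
\]
This is plurisubharmonic, since $L$ is pluriharmonic. Choose $\lambda$ large enough that $\lambda L(z_0)/2$ beats $\max_{\widebar\Omega}|U|$ near... more precisely, we argue differently on the two pieces of $K$. On $\Pi\cap\widebar\Omega$ we have $W\le -\lambda L(z_0)/2$. On $A$ we have $W\le -M+\lambda c$. At $z_0$ we have $W(z_0)\ge v(z_0)-1+\lambda L(z_0)/2$. First fix $\lambda$ so large that $\lambda L(z_0)/2 > 1-v(z_0)+1$, which gives $W(z_0)>1>0$ while $W\le 0$ on $\Pi\cap\widebar\Omega$ up to the choice. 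Then pick $M>\lambda c+1$, so that $W<0$ on $A$ as well. Since $U$ depends on $M$ but the bound at $z_0$ does not, the order of quantifiers is consistent. Thus $W(z_0)>\max_K W$.

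\textbf{Step 3: passing to $\C^N$.} By Lemma~\ref{approx}, $W\in\psh(\Omega)\cap C(\widebar\Omega)$ can be approximated uniformly on $\widebar\Omega$ by functions in $\psh(\C^N)$. With a small enough error, the strict inequality $W(z_0)>\max_K W$ persists. Hence $z_0\notin\hat K$, and $K$ is polynomially convex.

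The main obstacle is Step 1: producing a continuous-up-to-the-boundary function that is very negative on $A$ while controlling its value at $z_0$, especially when $z_0\in\partial\Omega$. This is exactly where non-propagation in the boundary-hull sense and B-regularity enter.
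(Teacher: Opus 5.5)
Your proof is correct and has the same overall structure as the paper's. You build a function in $\psh(\Omega)\cap C(\widebar\Omega)$ that is very negative on $A$ and bounded below at $z_0$, add a multiple of the pluriharmonic function $L$, pass to $\psh(\C^N)$ by Lemma~\ref{approx}, and conclude with H\"ormander's $\psh(\C^N)$-characterization of hulls. Adding $\lambda L$ before rather than after approximating, and tuning $\lambda, M$ instead of normalizing $L(z_0)>1$, are cosmetic changes.

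The real difference is in Step~1.
\begin{itemize}
\item The paper invokes Wikstr\"om's theorem that $u$ is a decreasing limit of functions in $\psh(\Omega)\cap C(\widebar\Omega)$. It then uses compactness of $A$ and monotonicity to pick a single $u_{n_0}<C$ on $A$.
\item You use only continuous solvability of the Dirichlet problem on B-regular domains. The Perron--Bremermann envelope $U_j$ of continuous data $\varphi_j\searrow\max(v^*,-M)$ dominates $v$, because $v$ competes. Dini's theorem on $A$ then gives $U_j\le -M+\delta$ there.
\end{itemize}
Your route needs less machinery than the full decreasing-approximation theorem, at the cost of a few more lines. One small fix: take $\varphi_j\le 0$, e.g.\ replace $\varphi_j$ by $\min(\varphi_j,0)$. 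This guarantees $U\le 0$, which you use on $\Pi\cap\widebar\Omega$.

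The boundary case you flag as delicate is in fact immediate from your own construction. Since $U_j$ is continuous on $\widebar\Omega$ with $U_j=\varphi_j\ge v^*$ on $\partial\Omega$, you get $U_j(z_0)\ge v^*(z_0)$ at once. Better still, for $z_0\in\partial\Omega\setminus A$ you need no $v$ at all. Take a continuous $\varphi\le 0$ on $\partial\Omega$ with $\varphi(z_0)=0$ and $\varphi=-M$ on $A$; its envelope $U$ gives $W(z_0)=\lambda L(z_0)/2>0>\max_K W$ once $M>\lambda\max_{\widebar\Omega}L$. So non-propagation is only needed for interior $z_0$. This sidesteps whether $z_0\notin\hat A$ for boundary points. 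That does not follow verbatim from non-propagation ($\hat A\cap\Omega=\emptyset$), and the paper's proof also uses it without comment.
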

\begin{proof}
We first show that $A$ is polynomially convex. By \cite[Theorem~4.3.4]{hormander}, it is enough to find for each $z_0\in \C^N\setminus A$ a psh function $u_{z_0}\in\psh(\C^N)$ such that
\[
u_{z_0}(z_0) \geq \max_{z\in A} u_{z_0}(z).
\]
For $z_0 \in \C^N \setminus \widebar \Omega$ we may construct $u_{z_0}$ from the plane separating $z_0$ and $\widebar \Omega$, so assume that $z_0 \in \widebar \Omega\setminus A$. Then, since $A$ is b-pluripolar and non-propagating, there exists $u\in \psh^-(\Omega)$ with 
\[
\limsup_{z\rightarrow A} u(z) =-\infty, \quad u(z_0) \neq -\infty,
\]
and by scaling, we may assume that $u(z_0)>-1$. By Wikström's approximation theorem on B-regular domains~\cite[Theorem~4.1]{wikstrom}, there exists $u_n \in C(\widebar \Omega) \cap \psh(\Omega)$ with $u_n \searrow u$, such that for every $C<-1$, at each $\zeta \in A$, there is a $n_\zeta$ with $u_{n_\zeta}(\zeta)<C$. This yields a covering 
\[
A \subset \bigcup \{u_{n_\zeta}<C\},
\]
and so by compactness and monotonicity (take a finite subcover and the maximal index), there exists $u_{n_0}$ with 
\[
u_{n_0}(z_0)>-1, \quad u_{n_0}<C \text { on }A. 
\]
The claim then follows from Lemma~\ref{approx}. 

To show that $A\cup(\Pi\cap\widebar \Omega)$ is polynomially convex, assume that $z_0 \in \widebar \Omega \setminus(A \cup \Pi)$. Multiplying $L$ with a constant if necessary, we may assume that $L(z_0)>1$. The preceding paragraph implies that we may find $\tilde u \in \psh(\C^N)$ with
\[
\tilde u(z_0)>-1, \quad \tilde u <0 \text { on }\widebar \Omega,\quad \tilde u< -\max_{\zeta\in A}|L(\zeta)| \text { on }A.  
\]
The psh function $\tilde u + L$ then satisfies 
\[
\tilde u(z_0)+L(z_0)>0, \quad \tilde u+L< 0 \text { on }A \cup (\Pi\cap \widebar \Omega),  
\]
which finishes the proof.
\end{proof}
\begin{remark}
The assumption that $\Omega$ is B-regular (which is automatic if $\Omega$ is strictly pseudoconvex) compensates for the fact that $A$ is a peak set in Stout's proof. Indeed, if $A$ is a peak set, we can instead apply Lemma~\ref{approx} to $u = \max(C_1\log|1-h_A|,C_2)$ with suitable constants $C_2 <0<C_1$. 
\end{remark}
The following simple example shows that the converse is not true: There are polynomially convex, b-pluripolar sets which do propagate into the interior.
\begin{example}
Let $\mathbb{B}$ denote the ball in $\C^2$, and take an arc $\gamma \subset \{|z_2|=1\}$. This set is b-pluripolar since $\gamma \subset \{\log|z_1| =-\infty\}$, and polynomially convex by~\cite[Corollary~3.1.2]{stoutbok}.  Let $u \in \psh^-(\mathbb{B})$ be a function such that $\limsup_{w\rightarrow \gamma}u(w)=-\infty$. Since $\gamma$ has positive arc length, it is not b-(pluri)polar on the disk $\{(0, z_2) \suchthat |z_2|<1\}$. It follows that $u(0,z_2) \equiv -\infty$; in other words, $\gamma$ propagates along $\{z_1=0\}\cap\mathbb B$.
\end{example}

With Lemma~\ref{polyconv} at hand, we are now ready to prove the main result in this section. 
\begin{theorem}\label{topdim}
Let $\Omega\Subset \C^N$ be strictly pseudoconvex, and let $A\subset \partial \Omega$ be a boundary pluripolar $F_\sigma$ set with topological dimension greater than $N-1$. Then $\hat A \cap \Omega$ is non-empty.
\end{theorem}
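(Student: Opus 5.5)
We argue by contraposition: assume $A$ is b-pluripolar, $F_\sigma$, and $\hat A\cap\Omega=\emptyset$, and show $\dim A\le N-1$. Write $A=\bigcup_k A_k$ with each $A_k$ compact. By the countable sum theorem for topological dimension, $\dim A=\sup_k \dim A_k$, so it suffices to show $\dim A_k<N$ for each $k$. Each $A_k\subset A$ is b-pluripolar, and $\hat A_k\subset \hat A$, so $A_k$ is also non-propagating. Hence we may assume from now on that $A$ is compact.

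The next step is to reduce to a convex situation, since Lemmas~\ref{stout} and~\ref{polyconv} require convexity. Topological dimension is local: it is enough to show that every $p\in A$ has a compact neighbourhood $A\cap \overline{V}$ of dimension $<N$. Then finitely many such pieces cover $A$, and the sum theorem applies again. Near $p$, strict pseudoconvexity gives local holomorphic coordinates (Narasimhan's lemma) in which $\Omega$ is strictly convex near $p$. Concretely, I would choose a biholomorphism $\Phi$ defined near $p$ and a small strictly convex domain $D$ with $D\subset \Phi(\Omega\cap V)$, such that $\partial D$ agrees with $\Phi(\partial\Omega\cap V)$ in a neighbourhood of $\Phi(p)$.

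We then check that the needed properties transfer to $D$. First, $D$ is B-regular because it is strictly convex. Second, $A'=\Phi(A\cap \overline{V'})\subset\partial D$, with $V'\Subset V$ small, is b-pluripolar for $D$: take $u\circ\Phi^{-1}$ restricted to $D$. Third, $A'$ is non-propagating in $D$. Indeed, $\hat A'\cap D\subset \Phi(\hat A\cap\Omega)$, because any $u\in\psh^-(\Omega)$ with $u\to-\infty$ at $A$ that is finite at a point $z_0$ pulls back to a witness on $D$ finite at $\Phi(z_0)$. Since $\hat A\cap\Omega=\emptyset$, every interior point of $\Omega$ has such a witness, so $\hat A'\cap D=\emptyset$.

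Now apply Lemma~\ref{stout} to $D$ and $A'$. Strict convexity gives the cap condition: for every $q\in A'$ and every neighbourhood $U$ of $q$ in $\partial D$, a hyperplane slightly translated from the tangent hyperplane at $q$ cuts off a cap $\{L>0\}\cap\partial D\subset U$ containing $q$. It remains to check that each set $(A'\cap\{L>0\})\cup(\Pi_U\cap\overline D)$ is polynomially convex. I would obtain this from the proof of Lemma~\ref{polyconv} applied to the compact set $A'\cap\{L\ge0\}$, which is still b-pluripolar and non-propagating. That lemma shows $(A'\cap\{L\ge0\})\cup(\Pi\cap\overline D)$ is polynomially convex, and this set equals $(A'\cap\{L>0\})\cup(\Pi\cap\overline D)$ because $A'\cap\Pi\subset\Pi\cap\overline D$. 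Lemma~\ref{stout} then gives $\dim A'<N$, hence $\dim(A\cap\overline{V'})<N$.

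The main obstacle I expect is the localization step. One must check that transporting to the convex model $D$ genuinely preserves both b-pluripolarity and non-propagation; in particular, that a witness function on $\Omega$ restricts properly, which uses $D\subset\Phi(\Omega)$ with $A'$ lying on the common boundary. One must also check that the dimension statements glue together via the sum theorem. If the local convexification is troublesome, a fallback is to work directly with local strictly convex pieces $\Omega\cap B(p,r)$ in Narasimhan coordinates and verify B-regularity there by hand.
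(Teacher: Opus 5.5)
Your proposal is correct and follows the paper's proof essentially step for step. The steps are the same: reduce to a compact piece via the sum theorem, localize to a strictly convex model in Narasimhan coordinates while preserving b-pluripolarity and non-propagation, then feed Lemma~\ref{polyconv} into Stout's Lemma~\ref{stout}. Your explicit application of Lemma~\ref{polyconv} to the compact set $A'\cap\{L\ge 0\}$ usefully spells out a detail the paper leaves implicit.
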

\begin{proof}
Let us first do some standard reductions. If we for a contradiction assume the existence of a non-propagating b-pluripolar $F_\sigma$ set $A$ with $\dim A \geq N$, then 
\[
A= \bigcup K_j
\]
with $K_j$ compact. This implies~\cite[Theorem~3.1.8]{Engelking} that there exists $j_0$ with $\dim K_{j_0}\geq N$. Pick a point $p\in K_{j_0}$ such that each neighborhood $U$ containing $p$ satisfies $\dim U\cap K_{j_0}\geq N$. Since $\Omega$ is strictly pseudoconvex,  there exists an open set $V$ containing $p$ and a local biholomorphism $f$ such that $f(\partial \Omega\cap V)$ is strictly convex in a neighborhood of $K\coloneq f(U\cap K_{j_0})$. Also, if $A$ does not propagate, then $K$ does not propagate into $B(p,\epsilon)\cap f(\Omega\cap V)$. We may therefore assume that the b-pluripolar set $A$ is compact, and that $\Omega$ is convex and B-regular, and even strictly convex in a neighborhood of $A$. 

However, strict convexity in a neighborhood of $p\in A$ implies that the open sets obtained by cutting off $\widebar \Omega$ with hyperplanes $\Pi: L(z)=0$ form a neighborhood basis for $p$. By Lemma~\ref{polyconv}, each
\[
(A\cap\{L(z)>0\}) \cup (\Pi \cap \widebar \Omega)
\]
is polynomially convex, so Lemma~\ref{stout} implies that $\dim A \geq N$ is not possible.
\end{proof}
\begin{remark}
    The condition $\dim A \geq N$ is sufficient but not necessary. This can be seen by translating and scaling the domain so that the function
\[
u(z_1,\dots, z_N) =\log\big(\sum_{j=2}^N|z_j|^2\big)
\]
satisfies $u\mid_\Omega \leq 0$ and $\{u=-\infty\} \cap \Omega \neq \emptyset$. A standard argument using the map $z\mapsto (z, 0, \dots, 0)$ then implies that $\{u=-\infty\}\cap \partial \Omega$ propagates to $\{u=-\infty\}\cap \widebar \Omega$.  Hence, there exist propagating b-pluripolar sets of dimension 1 on every bounded domain in $\C^N$, $N>1$.
\end{remark}

We end this section with the  following observation, which also follows from combining the approximation theorems of Wikström, Oka--Weil and Bremermann--Sibony.
\begin{proposition}
Let $\mathbb{B}\subset \C^N$ denote the unit ball. Then for every $A\subset \partial \mathbb{B}$, $z_0 \in \mathbb{B} \setminus \hat A$, there exist holomorphic polynomials $P_j$ and constants $c_j>0$ such that $\sum c_j \log|P_j|\in \psh^-(\mathbb{B})$, with
\[
 \sum_{j=1}^\infty c_j \log|P_j(z_0)|>-\infty, \quad \limsup_{w\rightarrow A}\sum_{j=1}^\infty c_j \log|P_j(w)|=-\infty.
\]
\end{proposition}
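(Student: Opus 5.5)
Given $A\subset\partial\mathbb{B}$ and $z_0\in\mathbb{B}\setminus\hat A$, I would first pick $u\in\psh^-(\mathbb{B})$ with $\limsup_{w\to A}u(w)=-\infty$ and $u(z_0)>-\infty$. After scaling, assume $u(z_0)>-1$. The aim is to approximate truncations $\max(u,-2^{j})$ on suitable pieces by functions $c\log|P|$, and then sum with weights so the series converges at $z_0$ but diverges near $A$.

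\emph{Step 1: continuous approximants.} The ball is B-regular, so Wikström's theorem~\cite[Theorem~4.1]{wikstrom} gives $u_n\in C(\widebar{\mathbb{B}})\cap\psh(\mathbb{B})$ with $u_n\searrow u$. Since $A$ need not be compact, I would use the $\limsup$ directly. For each $k$ there is a relatively open set $W_k\supset A$ in $\widebar{\mathbb{B}}$ with $u<-4^k$ on $W_k\cap\mathbb{B}$. I then want a single continuous $v_k\in\psh^-(\mathbb{B})\cap C(\widebar{\mathbb{B}})$ with $v_k(z_0)>-2$ and $v_k<-4^k$ on $A$, or at least near $A$. The difficulty is that Dini/compactness arguments need compact sets. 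I would instead work with $\max(u,-4^k)$ and take the Wikström approximants $u_n^{(k)}\searrow \max(u,-4^k)$, which are bounded. These can't be pointwise small near all of $A$ uniformly, so Step 1 alone is not enough.

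\emph{Step 2: pass to polynomials.} This is where the Bremermann--Sibony route, as in Lemma~\ref{approx}, should do the work. On the ball, Sibony's theorem represents psh functions as an upper envelope/limit of $c\log|f|$. More concretely, the standard fact is that on a polynomially convex (here convex) $\widebar{\mathbb{B}}$, any $v\in\psh^-(\mathbb{B})$ is the pointwise limit of a decreasing-in-a-regularized-sense sequence $\max_j c_j\log|P_j|$. I would use this pointwise, not uniformly: for each $k$ produce a finite family of polynomials whose $\max_j c_j\log|P_j|$ is $\le 0$ on $\mathbb{B}$, $>-2$ at $z_0$, and $\ge u$ up to error. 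Then I would replace the max by a sum. Since $\log|P_1P_2|\le$ sum and each term is $\le0$, $\max\ge\sum$, so this needs care. The alternative is to normalize each polynomial so $\sup_{\mathbb{B}}|P|\le1$ and use single functions $c\log|P|$, choosing $P$ to be small on $W_k$ and $|P(z_0)|$ not too small. Such $P$ exists by approximating $\exp(u_n/c)$-type data via Oka--Weil.

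\emph{Step 3: sum.} Let $w_k=c_k\log|P_k|$ with $w_k\le0$ on $\mathbb{B}$, $w_k(z_0)>-2$, and $w_k<-1$ on a neighborhood of $A$. Then $\sum_k w_k$ is a decreasing limit of psh functions, hence psh or $\equiv-\infty$. Its value at $z_0$ is $\ge-\sum 2\cdot(\text{weights})$, so I must weight by $2^{-k}$ and require $w_k<-4^k$ near $A$ instead. Then $2^{-k}w_k<-2^k$ near $A$, so the $\limsup$ toward $A$ is $-\infty$, while the value at $z_0$ stays bounded.

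\textbf{Main obstacle.} The key step is constructing a single polynomial $P_k$ with $\|P_k\|_{\mathbb{B}}\le1$, $|P_k|$ tiny on a neighborhood of the noncompact $A$, and $|P_k(z_0)|$ bounded below. I would get it by applying Wikström to $\max(u,-M)$, then Bremermann--Sibony with Oka--Weil as in Lemma~\ref{approx}, and then taking products $\prod_j P_j^{m_j}$ with rational exponents cleared to merge the max into one logarithm. That merging is where I expect the real difficulty.
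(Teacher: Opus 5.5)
There is a genuine gap, and it is at the step you yourself flag as the main obstacle: you never actually produce the polynomial $P_k$, and the tools you propose for it do not work. `Approximating $\exp(u_n/c)$ via Oka--Weil' is not available, since $\exp(u_n/c)$ is not holomorphic. The inequality `$\ge u$ up to error' in Step 2 points the wrong way: near $A$ you need an \emph{upper} bound. Merging $\max_l c_l\log\lvert P_l\rvert$ into a product also fails. Indeed, $\log\lvert\prod_l P_l^{m_l}\rvert=\sum_l m_l\log\lvert P_l\rvert$ behaves like a sum, and its non-maximal terms can be arbitrarily negative (even $-\infty$) at $z_0$.

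The missing observation, which is how the paper concludes (and which reappears in the proof of Theorem~\ref{jensen}), is that no merging is needed. Suppose $v\in\psh(\mathbb{B})\cap C(\widebar{\mathbb{B}})$ satisfies $\lvert v-\max_{l\le n}c_l\log\lvert P_l\rvert\rvert<\delta$ on $\widebar{\mathbb{B}}$. Then \emph{every} term satisfies $c_l\log\lvert P_l\rvert<v+\delta$, so each is negative and small wherever $v$ is. Meanwhile the index $l_0$ realizing the maximum at $z_0$ satisfies $c_{l_0}\log\lvert P_{l_0}(z_0)\rvert>v(z_0)-\delta$. So a single term of the Bremermann--Sibony/Oka--Weil approximant of a suitable continuous $u_j$ already is your $P_k$, and your Step~3 then finishes as you describe.

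The continuous $u_j$ (with $u_j(z_0)>-2^{-j}$ and $u_j<-1$ near $A$) is obtained in the paper as in Lemma~\ref{polyconv}. One takes Wikström's approximants $u_n\searrow u$, extracts a finite subcover of $A$ by the open sets $\{u_n<C\}$, and rescales. This uses compactness of $A$, so for compact $A$ your Step~1 is in fact enough. Your worry about non-compact $A$ is a fair one, since the paper's appeal to Lemma~\ref{polyconv} rests on the same finite subcover.

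However, your proposed remedy cannot work. You ask for a single $P_k$ with $\lvert P_k\rvert\le 1$ on $\mathbb{B}$, $c_k\log\lvert P_k\rvert<-4^k$ on a neighbourhood of $A$, and $c_k\log\lvert P_k(z_0)\rvert>-2$. Take $A=\{q_m\}$ countable and dense in $\partial\mathbb{B}$. This set is b-pluripolar and non-propagating, for instance via $\sum_m 2^{-m}\log(\lvert 1-\langle z,q_m\rangle\rvert/2)$. Any neighbourhood of $A$ is dense in $\partial\mathbb{B}$, so $c_k\log\lvert P_k\rvert\le-4^k$ on all of $\partial\mathbb{B}$, and hence at $z_0$ by the maximum principle. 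Thus no scheme producing one continuous function per level that is uniformly small near $A$ can handle general $A$. Some mechanism using infinitely many polynomials per level (as in this example) would be needed, and your proposal does not supply one.
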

\begin{proof}
If $z_0\in\mathbb{B}\setminus \hat A$, then by definition we can find $u\in \psh^-(\mathbb{B})$ such that $u(z_0)>-\infty$, $\limsup_{w\rightarrow A}u(w)=-\infty$. Reasoning as in the proof of Lemma~\ref{polyconv}, we can from $u$ construct continuous psh functions $u_j$ with
\[
u_j(z_0)>-1/2^j, \quad \limsup_{w\rightarrow A}u_j(w)<-1.
\]
By the proof of Lemma~\ref{approx}, $u_j$ can be uniformly approximated by functions of the form $\max_{1\leq k\leq n} c_{j,k} \log|P_{j,k}|$; in particular we can find a polynomial $P_j$ and $c_j>0$ such that
\[
c_j\log|P_j|<0, \quad c_j\log|P_j(z_0)|>-1/2^j, \quad \limsup_{w\rightarrow A}c_j\log|P_j(w)|<-1.
\]
As $\sum c_j\log|P_j|$ is a decreasing series of psh functions, finite at $z_0$, it is psh and not identically $-\infty$.
\end{proof}

\section{Sets of small Hausdorff dimension are boundary pluripolar}

In this section, our goal is to demonstrate that sets of sufficiently small Hausdorff dimension must be b-pluripolar and non-propagating. To fix notation, given a set $A$, we let $\mathcal{H^\beta}(A)$ denote its $\beta$-dimensional Hausdorff measure, and $\dim_\mathcal{H} A$ its Hausdorff dimension.
 
We begin by extracting the following result from Davie--Øksendal's paper~\cite{oksendal} on peak sets. 
\begin{theorem}\label{oksendalsats}
Let $\Omega\subset \C^N$ be a pseudoconvex domain, and let $P$ denote the set of points $\zeta \in \partial \Omega$ for which we can find a neighborhood $N$ and a $C^2$, strictly psh function $\rho \in \psh(N)$ such that $\rho(\zeta)=0$ and $\rho\leq 0$ in $N\cap \Omega$. Then every $A \subset P$ of zero 1-dimensional Hausdorff measure is non-propagating and b-pluripolar.
\end{theorem}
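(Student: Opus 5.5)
Since the statement is described as extracted from Davie--Øksendal, the plan is to combine their local construction with the ``postcompose with harmonic measure'' idea sketched for Theorem~B. The key input from~\cite{oksendal} is this. At each $\zeta\in P$, the strictly psh defining function $\rho$ lets us build (via the Levi polynomial and a Hörmander-type $\bar\partial$-correction on the pseudoconvex domain $\Omega$) a function $h_\zeta$ that is holomorphic on $\Omega$, continuous up to the boundary near $\zeta$, and satisfies $|h_\zeta|\le 1$. It is a peak function at $\zeta$ with the quantitative estimates
\[
c\|z-\zeta\|^2\le \abs{1-h_\zeta(z)}\le C\|z-\zeta\|,
\]
with constants locally uniform in $\zeta$. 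The upper bound is the Lipschitz-type control, so $P$ behaves like $P_1$ in the notation of Theorem~B. I would first localize: $P$ is a countable union of compact pieces $P_k$ on which the constants are uniform. Since a countable union of non-propagating b-pluripolar sets is again such (sum $\sum \epsilon_k u_k$ with $u_k(z_0)$ normalized, as in the proof of the Proposition ending Section~2), it suffices to treat $A\subset P_k$.

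Next, fix $z_0\in\Omega$ and $M>0$. Since $\mathcal H^1(A)=0$, cover $A$ by balls $B(\zeta_j,r_j)$, $\zeta_j\in P_k$, with $\sum r_j<\delta$. For each $j$, the upper bound gives $h_{\zeta_j}(B(\zeta_j,r_j)\cap\Omega)\subset \{w\in\D: \abs{1-w}\le Cr_j\}$. Let $\omega_j$ be the harmonic measure on $\D$ of the arc $I_j=\{e^{i\theta}:\abs{1-e^{i\theta}}\le 2Cr_j\}$. Then $\omega_j\ge c_0>0$ on that small region, and $\omega_j(h_{\zeta_j}(z_0))\lesssim r_j/\operatorname{dist}(h_{\zeta_j}(z_0),1)$. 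The lower peak estimate together with $z_0$ being interior keeps $h_{\zeta_j}(z_0)$ a definite distance from $1$, so $\omega_j(h_{\zeta_j}(z_0))\le C' r_j$. The function $v=-\sum_j \omega_j\circ h_{\zeta_j}$ is pluriharmonic hence psh, with values in $[-\infty,0]$. It satisfies $v\le -c_0$ near $A$ and $v(z_0)\ge -C'\delta$.

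Finally, I would choose coverings with $\delta=\delta_m=2^{-m}$ and set $u=M\sum_m v_m$. This is a decreasing limit of psh functions, finite at $z_0$, and $\limsup_{z\to A}u=-\infty$, since each $v_m\le -c_0$ on a neighborhood of $A$. Because $z_0$ was arbitrary, $A$ is b-pluripolar and $\hat A\cap\Omega=\emptyset$.

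The main obstacle is the first step: producing $h_\zeta$ holomorphic on all of $\Omega$ from purely local data at $\zeta$, with uniform Lipschitz control at $\zeta$ and with $h_\zeta(z_0)$ uniformly away from $1$. That requires the $\bar\partial$-solution with bounds up to the boundary near $\zeta$, and this is exactly the content borrowed from~\cite{oksendal}. If it is too delicate, I would fall back on Theorem~B in the form proved later in the paper, applied with $\beta=1$ after establishing $P\subset P_1$.
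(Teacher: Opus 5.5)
Your strategy is correct but differs from the paper's. The paper's proof is a direct citation. Davie--Øksendal's Theorem~2 already yields, for each compact $K\Subset\Omega$, a single $h_A\in H^\infty(\Omega)\cap C(\Omega\cup A)$ with $h_A=1$ on all of $A$ and $\abs{h_A}<1$ on $K$. Then $\log\abs{1-h_A}-\sup_\Omega\log\abs{1-h_A}$ is the required function, and the hypothesis $\mathcal{H}^1(A)=0$ is used only inside the cited theorem. You instead take from \cite{oksendal} only the pointwise object: a function in $H^\infty(\Omega)$ of modulus at most $1$ with a Lipschitz estimate at $\zeta$. You then do the covering and summation yourself, in pluripotential form via harmonic measures of arcs. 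In effect you prove Theorem~\ref{hausdorff} for $\beta=1$ and observe that it contains the present statement, which is what the remark after Theorem~\ref{hausdorff} asserts. The paper's route is shorter. Yours shows where the $1$-dimensional measure enters and needs only local peak functions. The hard input, a function holomorphic on all of $\Omega$, bounded by $1$ and Lipschitz at $\zeta$, is still borrowed, as you acknowledge.

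Two of your auxiliary claims are unjustified and should be removed rather than proved.
\begin{itemize}
\item There is no reason for $P$ to be a countable union of compact pieces with uniform constants. The pair $(N,\rho)$ is chosen separately at each point, and $P$ need not be open in $\partial\Omega$.
\item A localized $\bar\partial$-construction does not give a lower bound $c\|z-\zeta\|^2\le\abs{1-h_\zeta(z)}$ on all of $\Omega$ with $c$ uniform in $\zeta$.
\end{itemize}
As written, your uniform bound $\omega_j(h_{\zeta_j}(z_0))\le C'r_j$ is therefore not established. Neither claim is needed; the fix is the device from the proof of Theorem~\ref{hausdorff}.
\begin{itemize}
\item Since $\abs{h_\zeta}\le 1$, a Lipschitz bound with constant $C$ on $\Omega\cap B(\zeta,r_0)$ gives one on all of $\Omega$ with constant $\max(C,2/r_0)$.
\item Compose with the disc automorphism $w\mapsto\frac{1-\bar a}{1-a}\cdot\frac{w-a}{1-\bar a w}$, where $a=h_\zeta(z_0)$; it fixes $1$. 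This makes $h_\zeta(z_0)=0$ at the cost of multiplying the constant by $\frac{1+\abs{a}}{1-\abs{a}}$. Then $\omega_j(h_{\zeta_j}(z_0))=\abs{I_j}/2\pi$ exactly.
\item Sort $A$ into $A_n=\{\zeta\in A\suchthat 2^{n-1}<C_\zeta\le 2^n\}$. Cover each $A_n$ by balls centred in $A_n$ with $\sum_j r_{j,n}<\varepsilon_n 2^{-n}$.
\end{itemize}
With this change the rest of your argument goes through, and the factor $M$ at the end is superfluous.
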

\begin{proof}
Let $H^\infty(\Omega)$ denote the bounded holomorphic functions on $\Omega$. By \cite[Theorem~2]{oksendal}, for every $K \Subset \Omega$, there exists $h_A \in H^\infty(\Omega)\cap C(\Omega\cup A)$ such that
\[
h_A =1 \text{ on }A, \quad |h_A| < 1 \text{ on }K.
\]
In particular, 
\[
u\coloneq\log|1-h_A|-\sup_{\Omega}\log|1-h_A|
\]
is a negative psh function with $\limsup_{w\rightarrow A}u(w)=-\infty$ and $\sup_{w\in K}u(w)>-\infty$, so $A$ is a b-pluripolar set which does not propagate to $K$. Since $K$ was arbitrary, we conclude that $A$ is non-propagating.
\end{proof}
\begin{remark}
 The condition $\mathcal{H}^1(A)=0$ is the best one can hope for, as the remark to Theorem~\ref{topdim} shows. On the other hand, there exist non-propagating and b-pluripolar sets with ``full'' Hausdorff dimension $2N-1$, as there already exist peak sets with this property~\cite{kot,henriksen}. We also remark that the other main result in Davie--Øksendal~\cite[Theorem~1]{oksendal} allows for an anisotropic version of the above theorem. Roughly, if one additionally assumes that $N \cap \Omega$ is strictly pseudoconvex, then one can allow for sets that have zero 2-dimensional measure in the complex tangent directions, and zero 1-dimensional measure in the orthogonal direction (spanned at every point by $i$ times the normal vector). In particular, smooth arcs for which the tangent at every point lies in the complex tangent space are b-pluripolar.
\end{remark}
We shall suggest another way to construct b-pluripolar sets from peak functions, lifting the above pluripotential content of the Davie--Øksendal theorem to every domain where one can establish a local Hölder estimate for peak functions. More precisely, given a bounded domain $\Omega$, consider
the set $P_\beta \subset \partial \Omega$ consisting of $\zeta \in \partial \Omega$ for which there exists a peak function $h_{\{\zeta\}}\in H^\infty(\Omega)$ with
\[
\abs{h_{\{\zeta\}}(z)-1} \leq C_\zeta \|z-\zeta\|^\beta.
\]
Such peak functions can for example be constructed on various pseudoconvex domains of finite type, see~\cite{noell} for more details. We shall show 
\begin{theorem}\label{hausdorff}
   Every set $A\subset P_\beta$ of zero $\beta$-dimensional Hausdorff measure is non-propagating and b-pluripolar. 
\end{theorem}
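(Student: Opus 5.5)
The approach follows the hint in the introduction: for each covering ball we compose a peak function with the harmonic measure of a short arc of the unit circle, and then sum these over a cover of $A$ whose $\beta$-content is small.

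\emph{Reduction.} Fix a compact $K\Subset\Omega$. It suffices to construct $u\in\psh^-(\Omega)$ with $\limsup_{w\to A}u(w)=-\infty$ and $\inf_K u>-\infty$, since $K$ is arbitrary. Write $A=\bigcup_m A_m$, where $A_m$ consists of those $\zeta\in A$ for which the peak function satisfies $C_\zeta\le m$ and $\sup_K|h_{\{\zeta\}}|\le 1-1/m$. The second condition is a genuine constraint: $|h_{\{\zeta\}}|<1$ on the compact $K$ holds automatically, but a uniform margin is not. Each $A_m$ has $\mathcal H^\beta(A_m)=0$. If $u_m$ works for $A_m$ with $u_m\ge -2^{-m}$ on $K$, then $\sum_m u_m$ is psh, is not identically $-\infty$, and tends to $-\infty$ along $A$. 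Hence we may assume $C_\zeta\le C$ and $\sup_K|h_\zeta|\le 1-\delta$ for all $\zeta\in A$.

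\emph{Building block.} For $\zeta\in A$ and small $r>0$, let $I_r\subset\partial\D$ be the arc centred at $1$ of length comparable to $r^\beta$. Let $\omega_r$ be the harmonic measure of $I_r$ in $\D$, and set $v=-\omega_r\circ h_\zeta$. This function is pluriharmonic, takes values in $[-1,0]$, and is therefore psh and negative. If $\|z-\zeta\|<r$, then $|h_\zeta(z)-1|\le Cr^\beta$. With the arc length chosen as $\gg Cr^\beta$, the point $h_\zeta(z)$ lies in the disc of radius $Cr^\beta$ about $1$, well inside the "shadow" of $I_r$, so $\omega_r(h_\zeta(z))\ge 1/2$. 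If instead the peak value $1$ itself causes trouble, one can work with $(1+h_\zeta)/2$; in either case the estimate reduces to explicit harmonic measure in the disc. On $K$ we have $|h_\zeta|\le 1-\delta$, so by the Poisson kernel $\omega_r(h_\zeta(z))\le c_\delta |I_r|\le c'_\delta r^\beta$. Consequently $v\le -1/2$ on $B(\zeta,r)\cap\Omega$, while $v\ge -c'r^\beta$ on $K$.

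\emph{Summation.} Since $\mathcal H^\beta(A)=0$, for each $k$ choose a cover of $A$ by balls $B(\zeta_{k,i},r_{k,i})$ with centres in $A$, which is allowed because centres may be moved at the cost of doubling the radii, and with $\sum_i r_{k,i}^\beta<4^{-k}$. Define $v_k=\sum_i v_{\zeta_{k,i},r_{k,i}}$ and
\[
u=\sum_k 2^k v_k .
\]
On $K$ we get $u\ge -c'\sum_k 2^k4^{-k}>-\infty$. Each term is $\le 0$ and pluriharmonic, and the partial sums decrease, so $u$ is psh (not identically $-\infty$) and negative. Every $\zeta\in A$ lies in some ball of the $k$-th cover for each $k$. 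Hence near $\zeta$ we have $u\le -\tfrac12\sum_{k\le k_0}2^k$ on a neighbourhood, because each of the finitely many relevant terms is $\le -1/2$ on an open ball containing $\zeta$. Letting $k_0\to\infty$ gives $\limsup_{w\to A}u(w)=-\infty$.

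\emph{Main obstacle.} The delicate step is the building block. The harmonic-measure lower bound on $B(\zeta,r)$ must be uniform in $r$, which forces the geometric comparison between the disc of radius $Cr^\beta$ about $1$ and the arc of length $\sim r^\beta$. The upper bound on $K$ must in turn use the uniform margin $\delta$. I would first check these two bounds with the explicit Poisson integral. I expect the lower bound to hold as long as the arc is a fixed large multiple of $Cr^\beta$, since harmonic measure of an arc is at least $1/2$ on the disc having that arc as diameter, intersected with $\D$.
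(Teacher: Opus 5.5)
Your proposal is correct and follows essentially the paper's proof. You compose each peak function with minus the harmonic measure of a short arc at $1$ of length comparable to $Cr^\beta$, split $A$ according to the H\"older constant, sum over covers of small $\beta$-content, and then sum a sequence of such functions. The differences are cosmetic: your uniform margin $\sup_K|h_{\{\zeta\}}|\le 1-\delta$, combined with the Poisson kernel bound, replaces the paper's normalization $h_{\{\zeta\}}(a)=0$ at a single point (obtained via a disc automorphism, which makes the value at $a$ exactly the normalized arc length).

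One minor slip in your heuristic: on the disc having the chord of $(e^{-i\theta},e^{i\theta})$ as diameter, intersected with $\D$, the harmonic measure is only bounded below by $\frac12-\frac{\theta}{\pi}$ (compare Lemma~\ref{basic}), not by $\frac12$. This is harmless, since any fixed positive bound such as $\frac13$ suffices, and your actual choice of arc as a fixed large multiple of $Cr^\beta$ does give $\ge\frac12$ on $\D\cap\D(1,Cr^\beta)$ for small $r$.
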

 Before proceeding with a proof, we establish the following \textit{exercise} in potential theory in the complex plane.
\begin{lemma}\label{basic}
The harmonic measure $\omega(z,\D, (e^{-i\theta},e^{i\theta}))$ on the arc $(e^{-i\theta},e^{i\theta})$ satisfies
\[
\omega(z,\D, (e^{-i\theta},e^{i\theta})) \geq \frac{1}{2}-\frac{\theta}{2\pi} \quad z\in\D\cap\D(1, |1-e^{i\theta}|).
\]
In particular, 
\[
-3 \cdot \omega(z,\D, \partial\D\cap\D(1,r)) \leq -1 \quad z\in\D\cap\D(1,r)
\]
for $r$ small enough.
\end{lemma}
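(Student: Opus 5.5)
The plan is to use the classical description of harmonic measure of an arc in terms of the angle subtended. For $z\in\D$ and the arc $I=(e^{-i\theta},e^{i\theta})$,
\[
\omega(z,\D,I)=\frac{\alpha(z)}{\pi}-\frac{|I|}{2\pi}=\frac{\alpha(z)}{\pi}-\frac{\theta}{\pi},
\]
where $\alpha(z)$ is the angle at $z$ subtended by the chord from $e^{-i\theta}$ to $e^{i\theta}$, on the side facing the arc. This follows because $\frac{1}{\pi}\arg\frac{z-e^{i\theta}}{z-e^{-i\theta}}$ is harmonic and has the correct jump structure. I would verify the identity only through its boundary values: they are $1$ on $I$ and $0$ on the complementary arc, by the inscribed angle theorem. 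The lower bound then reduces to the elementary geometric claim that $\alpha(z)\geq \frac{\pi}{2}+\frac{\theta}{2}$ for $z\in\D\cap\D(1,|1-e^{i\theta}|)$.

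To prove this, consider the circle $C$ centered at $1$ through $e^{\pm i\theta}$, of radius $|1-e^{i\theta}|$. The chord $[e^{-i\theta},e^{i\theta}]$ is a chord of $C$ as well. For points on the arc of $C$ lying inside $\D$ (the side away from $1$), the inscribed angle subtending the chord equals $\pi$ minus half the central angle at $1$. That central angle is the angle at $1$ between $e^{\pm i\theta}-1$, which equals $\pi-\theta$. So on this arc of $C$ the angle is $\pi-\frac{\pi-\theta}{2}=\frac{\pi}{2}+\frac{\theta}{2}$.

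Points $z$ inside $\D(1,|1-e^{i\theta}|)\cap\D$ on the far side of the chord lie inside the disc bounded by $C$, so they see the chord under a larger angle, giving $\alpha(z)\geq \frac{\pi}{2}+\frac{\theta}{2}$. Points on the near side of the chord, between the chord and the arc $I$, have $\alpha(z)\geq\pi$, which is even better. Hence
\[
\omega\geq \frac{1}{2}+\frac{\theta}{2\pi}-\frac{\theta}{\pi}=\frac12-\frac{\theta}{2\pi}.
\]
The main care needed is the bookkeeping of which side of the chord $z$ lies on and which angle (interior or supplementary) is meant. I expect this orientation check to be the only real obstacle.

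For the second statement, choose $\theta$ with $|1-e^{i\theta}|=r$, so that $\partial\D\cap\D(1,r)$ is exactly the arc $(e^{-i\theta},e^{i\theta})$. As $r\to0$ we have $\theta\to0$, so eventually $\frac12-\frac{\theta}{2\pi}\geq\frac13$. Multiplying by $-3$ gives $-3\omega\leq-1$ on $\D\cap\D(1,r)$.
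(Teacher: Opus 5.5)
Your proposal is correct and follows the paper's proof: it uses the same angle formula $\omega=\frac{1}{\pi}(\varphi(z)-\theta)$ and the same inscribed-angle computation giving $\frac{\pi+\theta}{2}$ on the arc of $\partial\D(1,|1-e^{i\theta}|)$ lying in $\D$. The only difference is the last step. The paper applies the maximum principle on $\D\cap\D(1,|1-e^{i\theta}|)$, using that $\omega=1$ on the remaining boundary arc, whereas you argue pointwise that the subtended angle only grows inside that circle, treating the reflex-angle case on the near side of the chord separately; both arguments are valid.
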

\begin{proof}
Write $\omega(z,\D, (e^{-i\theta},e^{i\theta})) = \frac{1}{\pi}(\varphi(z) -\theta),$ where $\varphi$ is the angle subtended at $z$ by the arc, and pick $\zeta \in \D\cap\partial\D(1, |1-e^{i\theta}|)$. Applying the inscribed angle theorem twice, we conclude that $\varphi(\zeta) = \frac{\pi+\theta}{2}$. Hence
\[
\omega(\zeta,\D, (e^{-i\theta},e^{i\theta})) = \frac{1}{2}-\frac{\theta}{2\pi}.
\]
The maximum principle then implies the sought inequality.
\end{proof}
\begin{proof}[Proof of Theorem~\ref{hausdorff}] Fix $a \in \Omega$. By assumption, we can for every $\zeta \in A$ pick a peak function $h_{\{\zeta\}}$ and a constant $C_\zeta >0$ such that 
\[
\abs{h_{\{\zeta\}}(z)-1} \leq C_\zeta \|z-\zeta\|^\beta,
\]
and postcomposing with an automorphism $\overline \D \to \overline \D$ (keeping the exponent intact), we may assume that $h_{\{\zeta\}}(a)=0$. Now decompose $A=\bigcup_n A_n$, where
\[
A_n \coloneq\{\zeta \in A \suchthat 2^{n-1}< C_\zeta \leq 2^n\}.
\]
Using $\mathcal{H}^\beta(A) =0$, for every $\varepsilon>\varepsilon_n>0$ such that $\sum \varepsilon_n <\varepsilon$, there is a covering $A_n\subset \bigcup_{j} \mathbb{B}(\zeta_{j,n},r_{j,n})$ with
    \[
    \sum_{j=1}^{\infty} r_{j,n}^\beta <\frac{\varepsilon_n}{2^{n}}.
    \]
Using Lemma~\ref{basic}, we can choose $\varepsilon_n$ small enough so that 
\[
-3\cdot\omega(z,\D, \partial\D\cap\D(1,\varepsilon_n)) \leq -1, \quad z\in\D\cap\D(1,\varepsilon_n).
\]
Define the pluriharmonic functions
\[
u_{j,n}(z) \coloneq - \omega(h_{\{\zeta_{j,n}\}}(z), \partial\D \cap \D(1, 2^n r_{j,n}^\beta)),
\]
and note, since $|\partial\D \cap\D(1, r)|<2\pi r$, that
\begin{align*}
u_{j,n}(a) &= - \omega(0, \D, \partial\D \cap\D(1, 2^n r_{j,n}^\beta)) \\ &=- \frac{|\partial\D \cap\D(1, 2^n r_{j,n}^\beta)|}{2\pi}> - 2^n r_{j,n}^\beta.
\end{align*}
On the other hand, for $z$ with $\|z-\zeta_{j,n}\|<r_{j,n}$, we have
\[
h_{\{\zeta_{j,n}\}}(z) \in \D(1,2^nr_{j,n}^\beta)\subset\D(1,\varepsilon_n)
\]
since $2^n r_{j,n}^\beta < \varepsilon_n$, so 
\[
\limsup_{w\rightarrow \partial \Omega\cap\mathbb{B}(\zeta_{j,n},r_{j,n})} u_{j,n}(w) \leq -\frac{1}{3}.
\]
Hence for each $\varepsilon>0$, we can construct a plurisubharmonic function of the form $u_\varepsilon(z) \coloneq \sum_{j,n \in \N} u_{j,n}(z)$ with $\limsup_{w\rightarrow A} u_\varepsilon(w) \leq -\frac{1}{3}$, which is not identically $-\infty$ as
\[
u_\varepsilon(a) > - \sum_{i,n \in \N} 2^n r_{j,n}^\beta \geq - \varepsilon.
\]
In particular, repeating the argument for $\varepsilon =\frac{1}{2},\frac{1}{2^2}, \frac{1}{2^3}, \dots$ we can define 
\[
u\coloneq\sum_k u_{\frac{1}{2^k}}
\]
with the properties that $\limsup_{w\rightarrow A}u(w) =-\infty$ and $u(a)>-\infty$; since $a$ was arbitrary, we conclude that $A$ is b-pluripolar and non-propagating.
\end{proof}
\begin{remark}
By modifying the above argument slightly, one could recover Theorem~\ref{oksendalsats} (and its accompanying anisotropic version for the strictly pseudoconvex case) from the initial constructions in the proofs of Davie--Øksendal (a peak function with a local Lipschitz or anisotropic estimate). One could also consider other moduli of continuity, given by a real valued, increasing function $h(r)$ with $\lim_{r\rightarrow0} h(r) = h(0) = 0$. One can to such  \textit{measuring functions} associate a $h$-Hausdorff measure $\mathcal{H}^h$, for which the sets of measure zero would be those that can be covered with balls of radii $r_j$ with $\sum h(r_j) < \varepsilon$. It would then be natural to at least require $h(r)\leq \log(\frac{1}{r})^{-N}$, because Labutin~\cite{labutin} proved that every $A \subset \C^N$ with $\mathcal{H}^{\log(\frac{1}{r})^{-N}}(A)<\infty$ must be pluripolar. He also proved that for every measuring function $h$ with
\[
\liminf_{r\rightarrow0^+}h(r)\log\Big(\frac{1}{r}\Big)^N=0,
\]
one can find a non-pluripolar set $A_h$ such that $\mathcal{H}^h(A_h)=0$. This illustrates that the situation for interior plurisubharmonic singularities is very different. 
\end{remark}
\begin{remark}
A construction due to Yu~\cite{yu} shows that there exists a smooth, B-regular domain $\Omega$ and a non-empty set $P \subset \partial \Omega$ such that no point in $P$ admits a peak function in $A(\Omega)$. It is currently unclear how large $P$ can be. 
\end{remark}
\section{Jensen measures do not coincide with representing measures}
Throughout this section, let $\Omega \Subset \C^N$ denote a smooth, strictly pseudoconvex domain and let $M(\partial \Omega)$ denote the space of finite Borel measures on $\partial \Omega$.  We let $M_{z_0}(\partial \Omega)\subset M(\partial\Omega)$ denote the set of \textit{representing measures} at a point $z_0 \in \Omega$, i.e., $\mu \in M(\partial\Omega)$  such that
\[
f(z_0) =\int_{\partial \Omega}f \ d\mu, \quad f\in A(\Omega).
\]
If $\mu$ satisfies
\[
\log |f(z_0)| \leq \int_{\partial \Omega} \log|f| \ d\mu, \quad f\in A(\Omega),
\]
we write $\mu \in J_{z_0}(\partial\Omega)$ and say that $\mu$ is a \textit{Jensen measure} at $z_0$. Our goal in this section is to generalize 
\begin{theorem}[Hedenmalm~\cite{hedenmalm}]\label{hedenmalmsats}
Let $\mathbb{B}$ denote the unit ball in $\C^N$, $N>1$, and let $\bar 0 \in \mathbb{B}$ denote its center. Then the set $M_{\bar 0}(\partial \mathbb{B})\setminus J_{\bar 0}(\partial \mathbb{B})$ is non-empty.
\end{theorem}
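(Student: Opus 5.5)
The plan is to separate the two classes of measures by their compact totally null sets. Recall that a set $K \subset \partial\mathbb{B}$ is totally null for a class of measures if $\mu(K)=0$ for every $\mu$ in the class. The proof then has two halves.

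\textbf{Step 1: Jensen-null sets are large.} We show that every compact b-pluripolar set $K\subset\partial\mathbb{B}$ that does not propagate to $\bar 0$ is null for every $\mu\in J_{\bar 0}(\partial\mathbb{B})$.
\begin{itemize}[leftmargin=*]
\item First, the Jensen inequality for $\log|f|$, $f\in A(\mathbb{B})$, extends to psh functions that are continuous on $\widebar{\mathbb{B}}$. By Lemma~\ref{approx} and the Bremermann--Sibony approximation used in its proof, such a function is a uniform limit of functions $\max_j c_j\log|P_j|$.
\item For these maxima, $\int \max_j c_j\log|P_j|\,d\mu\ge \max_j\int c_j\log|P_j|\,d\mu\ge \max_j c_j\log|P_j(\bar0)|$. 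Passing to the uniform limit gives $u(\bar 0)\le\int u\,d\mu$ for all $u\in\psh(\mathbb{B})\cap C(\widebar{\mathbb{B}})$.
\item Next, since $K$ does not propagate to $\bar0$, the argument in the proof of Lemma~\ref{polyconv} (Wikström approximation plus compactness) gives continuous negative psh functions $u_n$ with $u_n(\bar0)>-1$ and $u_n<-n$ on $K$.
\item Then $-1<\int u_n\,d\mu\le -n\,\mu(K)$, so $\mu(K)\le 1/n$ for every $n$. Hence $\mu(K)=0$.
\end{itemize}

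\textbf{Step 2: a representing measure charging such a set.} It suffices to find a compact, b-pluripolar, non-propagating $K$ and some $\mu\in M_{\bar0}(\partial\mathbb{B})$ with $\mu(K)>0$. By Step 1, such a $\mu$ is not a Jensen measure.

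By the F.\ and M.\ Riesz/Henkin theory on the ball (Rudin), the compact sets that are null for all representing measures are exactly the totally null sets, i.e.\ the zero sets, equivalently peak-interpolation sets, for $A(\mathbb{B})$. So the goal becomes: find a compact b-pluripolar non-propagating $K$ that is \emph{not} totally null.

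The natural candidate uses Theorem~\ref{hausdorff}. On the ball every boundary point lies in $P_1$ (take $h(z)=(1+\langle z,\zeta\rangle)/2$), and in the complex tangential directions one even has Hölder exponent $2$ via the anisotropic remark. So take $K$ to be a smooth arc transverse to the complex tangent space, say a short piece of the circle $\{(e^{it},0,\dots,0)\}$. That choice fails, because such a circle propagates.

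A better choice is a compact set of Hausdorff dimension less than $1$ that is nonetheless not totally null, for instance a Cantor set on the circle $\{(e^{it},0,\ldots,0)\}$. By Theorem~\ref{hausdorff} it is b-pluripolar and non-propagating. Then we need a representing measure at $\bar0$ charging it. Concretely, I would take the pushforward of a suitable measure under the inclusion of the slice disc $\{(\lambda,0,\ldots,0)\}$: for $f\in A(\mathbb{B})$, the restriction $f(\lambda,0,\ldots,0)$ lies in the disc algebra. This yields representing measures $\nu$ on the circle (normalized Lebesgue measure, or more generally $\nu$ with $\hat\nu(k)=0$ for all $k>0$, real and positive). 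Positive measures on the circle representing $0$ for the disc algebra are exactly Lebesgue measure, however, and Lebesgue measure gives a Cantor set zero mass. So the slice approach must be replaced.

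Instead, I would adapt Hedenmalm's construction, exploiting $N>1$. Seek $\mu=\sigma+\nu$, where $\sigma$ is surface measure and $\nu$ is a real measure annihilating $A(\mathbb{B})$ (via Henkin, $\nu$ is orthogonal to $A(\mathbb{B})$) with $\nu|_K>0$ and $\mu\ge0$. Since $K$ is not totally null (Rudin: totally null compacts in the ball are the same as null sets for all annihilating measures' total variation), there exists an annihilating measure with nonzero restriction to $K$. By Henkin's theorem that restriction itself annihilates. Taking real parts and balancing against $\sigma$ gives positivity.

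\textbf{Main obstacle.} The hard step is exhibiting a compact b-pluripolar, non-propagating set that is not totally null. One candidate is a Cantor set of dimension less than $1$ placed on a totally real curve transverse to the complex tangent space, with Rudin's characterisation that totally null sets are exactly those with $\mu(K)=0$ for every annihilating measure, plus a Henkin-type positivity argument. I expect this existence argument to be the crux; it is also where $N>1$ enters.
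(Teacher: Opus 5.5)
Your Step~1 is correct and is essentially the paper's Theorem~\ref{jensen}. Note that it only needs $K$ not to propagate to $\bar 0$.

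The gap is in Step~2. You never produce the set $K$, and the extra requirement that $K$ be non-propagating points you in the wrong direction. Your concrete candidate cannot work. Let $E\subset\Gamma=\{(e^{it},0,\dots,0)\}$ be compact of zero length. By Fatou/Rudin--Carleson there is $g\in A(\D)$ with $g=1$ on $E$ (viewed in $\partial\D$) and $|g|<1$ elsewhere on $\widebar\D$. On $\widebar{\mathbb{B}}$ the condition $|z_1|=1$ forces $z_2=\dots=z_N=0$, so $f(z)=g(z_1)$ peaks exactly on $E$. Hence $\mu(E)=\lim_n\int f^n\,d\mu=\lim_n g(0)^n=0$ for every $\mu\in M_{\bar 0}(\partial\mathbb{B})$. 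This is not special to the slice: by Davie--Øksendal, compact subsets of $\partial\mathbb{B}$ of zero one-dimensional Hausdorff measure are peak sets. So sets produced via small Hausdorff dimension are exactly the wrong place to look.

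The positivity step (``balance a real annihilating measure against $\sigma$'') is also unjustified as written. The negative part of such a measure need not be dominated by $\sigma$, and this is precisely the job of the Cole--Range theorem.

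The missing idea is to use a set that does not propagate to $\bar 0$ but does propagate elsewhere. Propagation is exactly what prevents a compact set from being a zero set; your rejected arc on $\Gamma$ fails only because the slice disc it propagates into contains $\bar 0$. Take $K=\partial\mathbb{B}\cap\{z_1=\tfrac12\}$.
\begin{enumerate}
\item $K$ is b-pluripolar via $\log|z_1-\tfrac12|-\log\tfrac32$, which is finite at $\bar0$. So Step~1 gives $\mu(K)=0$ for all $\mu\in J_{\bar0}(\partial\mathbb{B})$.
\item Since $N>1$, $K$ is the boundary of the complex ball $\mathbb{B}\cap\{z_1=\tfrac12\}$. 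By the maximum principle on that slice, $K$ propagates into it.
\item A zero set $\{f=0\}$ of $A(\mathbb{B})$ is non-propagating (use $\log|f|$), so $K$ is not a zero set.
\item By the Cole--Range/Rudin characterization, some $\nu\in M_{\bar0}(\partial\mathbb{B})$ has $\nu(K)>0$, and Step~1 finishes.
\end{enumerate}
This is the paper's argument.

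On the ball you can even write such a measure down. Let $m_K$ be normalized surface measure on $K$ (it represents $(\tfrac12,0,\dots,0)$), and let $P$ be the Poisson kernel of $\D$ at $\tfrac12$. Fix $0<c<\tfrac13$ and set $\nu=c\,m_K+(1-cP(e^{it}))\frac{dt}{2\pi}$, with the second term placed on $\Gamma$. Then $\nu$ is positive and has $\nu(K)=c$. It represents $\bar0$: test on $\lambda\mapsto f(\lambda,0,\dots,0)$. Finally, $f=z_1-\tfrac12$ violates Jensen's inequality for $\nu$ directly.
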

Hedenmalm's proof is by construction. Our strategy will be to non-constructively show that there exists a measure $\nu  \in M_{z_0}(\partial\Omega)$ and a compact set $K\subset \partial \Omega$ such that $\nu(K)>0$, with $\mu(K)=0$ for every $\mu \in J_{z_0}(\partial\Omega)$. Following~\cite{Rudin}, we will use the following terminology:

\begin{definition}
Given a family of measures $\mathcal{M}\subset M(\partial\Omega)$, we say that a set $A\subset \partial \Omega$ is \textit{totally null} with respect to $\mathcal{M}$ if $\mu(A)=0$ for every $\mu \in \mathcal{M}$.
\end{definition}
It is well-known (using the Cole--Range theorem and Bishop's theorem, cf. for example~\cite[pp.~204--205]{Rudin}) that a compact set $K$ is totally null with respect to $M_{z_0}(\partial\Omega)$ if and only if $K$ is a \textit{zero set} for $A(\Omega)$, in other words there exists $f\in A(\Omega)$ such that $f(\zeta) = 0$ for every $\zeta \in K$ and $f(z) \neq 0$ for every $z \in \widebar \Omega \setminus K$. We can then construct
\[
u(z) \coloneq \log|f(z)| - \max_{w\in \widebar \Omega} \log|f(w)|
\]
which shows that $K$ is b-pluripolar and non-propagating. In particular, for any compact, propagating b-pluripolar set $\tilde K$ that does not propagate to $z_0$ (take for example $\tilde K \coloneq \partial \Omega \cap \Pi$, where $\Pi$ is a complex hyperplane not containing $z_0$), there exists $\nu \in M_{z_0}(\partial \Omega)$ such that $\nu (\tilde K) >0$. To generalize Theorem~\ref{hedenmalmsats}, it is therefore enough to prove
\begin{theorem}\label{jensen}
Let $A\subset \partial \Omega$ be a b-pluripolar set that does not propagate to $z_0$. Then $A$ is totally null with respect to $J_{z_0}(\partial \Omega)$.
\end{theorem}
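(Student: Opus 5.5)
Given $\mu\in J_{z_0}(\partial\Omega)$, we show $\mu^*(A)=0$ (outer measure, so Borel measurability of $A$ is not needed). The plan is to upgrade the Jensen inequality from $\log|f|$, $f\in A(\Omega)$, to plurisubharmonic functions continuous up to the boundary, and then feed it the functions that witness non-propagation.

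\textbf{Step 1.} Show that
\[
v(z_0)\le\int_{\partial\Omega} v\,d\mu \qquad \text{for all } v\in\psh(\Omega)\cap C(\widebar\Omega).
\]
For $c>0$ and $f\in A(\Omega)$, the Jensen inequality gives $c\log|f(z_0)|\le\int c\log|f|\,d\mu$. Taking a maximum of finitely many such terms, we have for each $k$
\[
\max_j c_j\log|f_j(z_0)|\le \int c_k\log|f_k|\,d\mu\le\int \max_j c_j\log|f_j|\,d\mu,
\]
so the inequality holds for every $\max_j c_j\log|f_j|$. By the Bremermann--Sibony approximation used in the proof of Lemma~\ref{approx}, any $v\in\psh(\Omega)\cap C(\widebar\Omega)$ is a uniform limit on $\widebar\Omega$ of such maxima. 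This requires the holomorphic approximants to be in $A(\Omega)$, or polynomials after an Oka--Weil step. On a strictly pseudoconvex domain this should work by first extending $v$ to a slightly larger strictly pseudoconvex neighbourhood, which is possible since smooth strictly pseudoconvex domains are B-regular and admit a Stein neighbourhood basis. This transfer from the convex setting of Lemma~\ref{approx} to strictly pseudoconvex $\Omega$ is the main obstacle. If it resists, I would instead localize and use the convexification from Theorem~\ref{topdim}, or appeal directly to the known fact that Jensen measures for $A(\Omega)$ coincide with psh-Jensen measures for continuous psh functions on such domains. Passing to the uniform limit then gives Step 1.

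\textbf{Step 2.} Take $u\in\psh^-(\Omega)$ with $\limsup_{w\to A}u(w)=-\infty$ and $u(z_0)>-\infty$. By Wikström's approximation theorem, exactly as in the proof of Lemma~\ref{polyconv}, choose $u_n\in\psh(\Omega)\cap C(\widebar\Omega)$ with $u_n\searrow u$ and $u_n\le 0$ (after subtracting small constants). The boundary values satisfy $u_n^*\searrow$ a function $\tilde u$ on $\partial\Omega$, and $\tilde u=-\infty$ on $A$: for each $\zeta\in A$ and each $C$, some $u_n(\zeta)<C$.

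\textbf{Step 3.} Apply Step 1 to each $u_n$ and use monotone convergence, since all $u_n\le0$:
\[
-\infty<u(z_0)\le u_n(z_0)\le\int u_n\,d\mu\searrow\int\tilde u\,d\mu.
\]
Hence $\int\tilde u\,d\mu>-\infty$. The set $\{\tilde u=-\infty\}$ is a Borel $G_\delta$-type set, being a decreasing limit of continuous functions, and it contains $A$. It therefore has $\mu$-measure zero, so $\mu^*(A)=0$. Since $\mu\in J_{z_0}(\partial\Omega)$ was arbitrary, $A$ is totally null with respect to $J_{z_0}(\partial\Omega)$.
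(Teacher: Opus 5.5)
Your proposal is correct and is essentially the paper's proof: Bremermann--Sibony approximation upgrades the Jensen inequality to $\psh(\Omega)\cap C(\widebar\Omega)$ (apply it at an index $k$ attaining the maximum at $z_0$, not at every $k$ as written), and Wikström's theorem with monotone convergence then gives $\int \tilde u\,d\mu>-\infty$, with your $G_\delta$ argument handling the measurability of $A$ a bit more carefully than the paper does. The step you flag is settled exactly as in the proof of Lemma~\ref{approx}, by Hed's boundary approximation theorem (valid since $\partial\Omega$ is smooth) with $\Omega_j\Supset\Omega$ taken strictly pseudoconvex, not by B-regularity or a Stein neighbourhood basis; your localization fallback would not help, since the inequality in Step~1 is global.
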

\begin{proof}
Let $\psh^b(\Omega)\coloneq \psh(\Omega) \cap \usc(\widebar \Omega)$ denote the upper bounded psh functions on $\Omega$ (with upper semicontinuous extension to the boundary). We shall first show that for every $\mu \in J_{z_0}(\partial\Omega)$, we have
\[
u(z_0) \leq \int_{\partial \Omega} u \ d\mu, \quad u\in \psh^b(\Omega).
\]
By Wikström's approximation theorem and the monotone convergence theorem, it is enough to consider $u \in \psh(\Omega) \cap C(\widebar \Omega)$. As in the proof of Lemma~\ref{approx}, we can find $\Omega_j\Supset \Omega$ and $u_{j} \in \psh(\Omega_j)\cap C(\Omega_j)$ such that $|u-u_{j}|<\frac{1}{j}$ on $\widebar \Omega$. Furthermore, as $\Omega$ is strictly pseudoconvex, we can assume that $\Omega_j$ is strictly pseudoconvex as well. Reasoning as in the proof of \cite[Corollary~1.3.10]{stoutbok}, the approximation result of Bremermann--Sibony yields $f_1,\dots,f_n\in \mathcal{O}(\Omega_j) \subset A(\Omega)$ and constants $1\geq c_1,\dots, c_n>0$ such that
\[
u_j -  \frac{1}{j}\leq \max_{k=1,\dots,n} c_k \log |f_k| \leq u_j
\]
on $\widebar \Omega$. Consequently, there is a $f_{k_0}$ such that
\[
u_j(z_0) -   \frac{1}{j}  \leq  c_{k_0} \log |f_{k_0}(z_0)| \leq u_j(z_0).
\]
Hence,
\[
u_j(z_0) - \frac{1}{j}\leq c_{k_0}\log|f_{k_0}(z_0)|\leq \int_{\partial \Omega}\max_{k=1,\dots,n} c_k \log |f_k| \ d\mu \leq\int_{\partial \Omega} u_j \ d\mu,
\]
and so the assertion  follows by letting $j\rightarrow \infty$. In the special case that $u_A$ is a negative psh function with $u_A(z_0)>-\infty$ and $u_A(\zeta) =-\infty$ for every $\zeta \in A$, then
\[
 -\infty< \int_{\partial \Omega }u_A \ d\mu\leq \int_Au_A \ d\mu.
\]
We conclude that $\mu(A)=0$, which finishes the proof.
\end{proof}
\begin{corollary}\label{hedenmalmgen} Let $\Omega\subset \C^N, N>1$ be a smooth, strictly pseudoconvex domain. Then for every $z_0 \in \Omega$, $M_{z_0}(\partial \Omega)\setminus J_{z_0}(\partial \Omega)$ is non-empty.
\end{corollary}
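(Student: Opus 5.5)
The plan is to produce a compact set $\tilde K\subset\partial\Omega$ and a measure $\nu\in M_{z_0}(\partial\Omega)$ with $\nu(\tilde K)>0$, while Theorem~\ref{jensen} gives $\mu(\tilde K)=0$ for every $\mu\in J_{z_0}(\partial\Omega)$. Such a $\nu$ then cannot be a Jensen measure. By the discussion preceding Theorem~\ref{jensen}, a compact set is totally null for $M_{z_0}(\partial\Omega)$ if and only if it is a zero set for $A(\Omega)$, and zero sets are non-propagating. It therefore suffices to find a compact b-pluripolar set $\tilde K$ that propagates into $\Omega$ but does not propagate to $z_0$. Such a set is not a zero set, so some $\nu\in M_{z_0}(\partial\Omega)$ charges it.

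For the construction I would take a complex affine hyperplane $H$ with $z_0\notin H$ and $H\cap\Omega\neq\emptyset$, and set $\tilde K\coloneq H\cap\partial\Omega$. Write $H=\{\ell=0\}$ with $\ell$ complex affine. Then
\[
u\coloneq\log|\ell|-\max_{\widebar\Omega}\log|\ell|
\]
is negative and psh, equals $-\infty$ on $\tilde K$, and is finite at $z_0$. This shows that $\tilde K$ is b-pluripolar and does not propagate to $z_0$.

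To see that $\tilde K$ propagates, I would use $N>1$. Pick $p\in H\cap\Omega$ and a complex line $L\subset H$ through $p$. The slice $L\cap\Omega$ is a bounded open set in $L\cong\C$. Let $D$ be the connected component of $L\cap\Omega$ containing $p$; its boundary $\partial D$ lies in $\tilde K$ and is not polar in $\C$, since $D$ is a bounded nonempty plane domain. Any $v\in\psh^-(\Omega)$ with $\limsup_{w\to\tilde K}v(w)=-\infty$ restricts to a negative subharmonic function on $D$ tending to $-\infty$ at every point of $\partial D$. By the maximum principle, $v|_D\equiv-\infty$, so $p\in\hat{\tilde K}\cap\Omega$. (If needed, one can instead take the whole $H$ as a slice, as in the remark after Theorem~\ref{topdim}.) Hence $\tilde K$ is not a zero set for $A(\Omega)$, and the Cole--Range and Bishop characterization yields $\nu\in M_{z_0}(\partial\Omega)$ with $\nu(\tilde K)>0$.

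The main obstacle is simply justifying that the cited characterization of totally null compact sets for $M_{z_0}$ applies to smooth strictly pseudoconvex domains. I would take it as given from the text: a compact set that is not a zero set is not totally null, which is the direction needed here. With $\nu(\tilde K)>0$, Theorem~\ref{jensen} excludes $\nu$ from $J_{z_0}(\partial\Omega)$, which proves the corollary.
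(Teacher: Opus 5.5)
Your proposal is correct and follows the paper's argument: the hyperplane section $\tilde K=H\cap\partial\Omega$ propagates, so it is not a zero set for $A(\Omega)$, and Cole--Range/Bishop then gives $\nu\in M_{z_0}(\partial\Omega)$ with $\nu(\tilde K)>0$, while Theorem~\ref{jensen} makes $\tilde K$ totally null for $J_{z_0}(\partial\Omega)$. Your check of propagation is sound, including the requirement $H\cap\Omega\neq\emptyset$ that the paper leaves implicit; the non-polarity of $\partial D$ is not actually needed, since $v\to-\infty$ at every point of $\partial D$ and the maximum principle alone forces $v|_D\equiv-\infty$.
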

\begin{remark}
The assumption that $\Omega$ is smooth and strictly pseudoconvex can be relaxed somewhat, since the Cole--Range theorem holds on any domain where $A(\Omega)$ is \textit{tight}. See~\cite{boos} and the references therein for more details.
\end{remark}
\bibliographystyle{amsplain}
\bibliography{name}

\end{document}